\documentclass[8pt]{elsarticle}

\usepackage[english]{babel}
\usepackage{graphicx,epstopdf,epsfig}
\usepackage{amsfonts,epsfig,fancyhdr,graphics,amsmath,amssymb,float}
\usepackage{enumerate,mathtools,todonotes}
\usepackage[matrix,arrow,tips,curve]{xy}
\usepackage{latexsym,eucal,color}
\usepackage{pstricks}
\usepackage{dsfont}
\usepackage{setspace}
\usepackage{multicol}
\usepackage{enumitem}
\usepackage{tikz,pgfplots,graphicx}
\usetikzlibrary{patterns}
\usetikzlibrary{shapes,arrows,positioning,decorations.markings,math}
\usepackage{tkz-euclide}
\pgfplotsset{compat=1.11}
\usepackage{hyperref}
\hypersetup{colorlinks=true,
    linkcolor=blue,
    citecolor=blue,
    filecolor=cyan,      
    urlcolor=magenta
}
\usepackage[capitalise]{cleveref}  

\newtheorem{theorem}{Theorem}[section]
\newtheorem{lemma}[theorem]{Lemma}
\newtheorem{corollary}[theorem]{Corollary}
\newtheorem{proposition}[theorem]{Proposition}
\usepackage{subcaption}

\newcommand{\foroverarrowvee}[1][1.1ex]{\mathrel{\overrightarrow{\rule{0pt}{1.6ex}\saj }}}
\newcommand{\backoverarrowvee}[1][1.1ex]{\mathrel{\overleftarrow{\rule{0pt}{1.6ex}\saj }}}
\newcommand{\symoverarrowvee}[1][1.1ex]{\mathrel{\overleftrightarrow{\rule{0pt}{1.6ex}\saj }}}

\newcommand{\svj}{\mathbin{\dot\vee}}
\newcommand{\saj}{\mathbin{\underline{\smash{\vee}}}}

\begin{document} 
\bibliographystyle{abbrv}
\setcounter{page}{1}

\thispagestyle{empty}

\title{Spectra of Subdivision Products of Digraphs}

\date{~} 
  
\author[MC]{M. Cavers\texorpdfstring{\corref{cor1}}{}}
\ead{michael.cavers@utoronto.ca}
\author[FM]{F. Maghsoudi}
\ead{farzad.maghsoudi93@gmail.com}
\author[BM]{B. Miraftab}
\ead{bobby.miraftab@gmail.com}

\cortext[cor1]{Corresponding author}

\address[MC]{Department of Computer and Mathematical Sciences, University of Toronto Scarborough,\\ Toronto, ON Canada M1C 1A4}
\address[BM]{School of Computer Science,
Carleton University,\\ Ottawa, Ontario, K1S 5B6, Canada}   

\begin{abstract}
This paper introduces four types of subdivision products for simple directed graphs extending those from the undirected case, in particular, the subdivision-vertex join, subdivision-arc join, subdivision-vertex corona and subdivision-arc corona.
Structural and spectral properties of these constructions are analyzed, with a focus on adjacency, Laplacian and signless Laplacian spectra.
\end{abstract}

\begin{keyword}
digraphs \sep eigenvalues \sep $(0,1)$-matrices \sep subdivisions
\MSC[2010]{05C35, 05C50, 15A18.}
\end{keyword}

\maketitle
\section{Introduction}
Classical graph products such as disjoint union, Cartesian, Kronecker (tensor), lexicographic, direct, and strong products have been extensively studied, together with the spectra of their adjacency, Laplacian, and signless Laplacian matrices \cite{cvetkovic1980spectra, godsil2001algebraic, imrich2000product}. 
Subdivision-based product constructions have also been considered and of particular relevance are the subdivision-vertex and subdivision-edge joins formed by first subdividing the edges of one graph and then “joining’’ either the original vertices or the newly inserted vertices to a second graph.
Their adjacency, Laplacian, and signless Laplacian spectra are  well understood when the first factor is a regular graph \cite{Indulal,Liu,shinoda1980chara}. 
Subdivision corona products of two graphs have also been studied from a spectral approach \cite{BarikKalita,barik2017laplacian,das2017normalized,fiuj2023four,LIU-subdivision,PengliMiao2013,varghese2016spectra}. 

In the directed setting, recent work develops a unified spectral framework for standard digraph products (Cartesian, lexicographic, direct, strong) and related matrix constructions \cite{catral2020spectra}. 
More recently, digraphs with few adjacency eigenvalues have been studied \cite{cavers2025digraphs}, and extensions of corona products to the directed graph setting have been introduced with a focus on their adjacency, Laplacian, and signless Laplacian spectra \cite{cavers2025spectra}.
Subdivision digraphs, obtained by subdividing every arc of a digraph, have also been considered and have played a meaningful role in spectral theory, for instance in showing that every complex number is a limit point of digraph eigenvalues \cite{zhang2006limit}. 
They also appear as a special case in a rich family of digraph transformations built from incidence and line-digraph operations \cite{DengKelmans2013}.
For further background on digraph spectral theory we refer to Brualdi’s survey \cite{Brualdi}. 

The purpose of this paper is to generalize subdivision-based operations to digraphs and determine the adjacency, Laplacian, and signless Laplacian spectra of such products in terms of the spectra of the factors, thereby extending to digraphs what is known for graphs. 
Preliminary material needed for this development is presented in \cref{sec:prelim}, while in \cref{sec:subdiv}, we review known spectral results on subdivision graphs and digraphs.
In \cref{sec:sub-join}, we introduce the subdivision-vertex join and subdivision-arc join, defined by first subdividing each arc of a digraph and then attaching a second digraph to the original vertices or the newly inserted vertices with bidirectional arcs, respectively. 
Finally, \cref{sec:subcor} introduces the subdivision-vertex corona and subdivision-arc corona of two digraphs.

\section{Preliminaries}\label[section]{sec:prelim}
All graphs and digraphs are assumed to be nonempty and simple (i.e., no loops or multiple edges/arcs), however, $2$-cycles are permitted in digraphs.
Terminology not defined in this paper that relates to graphs can be found in \cite{brouwer2011spectra, cvetkovic1980spectra, godsil2001algebraic}.
Throughout, we assume $D$ is a digraph with vertex set $\{v_1,v_2,\ldots,v_n\}$ and arc set $E(D)$ which consists of ordered pairs of vertices (called \emph{arcs}).
We write $uv\in E(D)$ to mean $(u,v)\in E(D)$. 
If both $uv$ and $vu$ belong to $E(D)$, then the arcs $uv,vu$ form a \emph{symmetric edge}. 
A digraph is \emph{strongly connected} if it is possible to reach every vertex starting from any other vertex by traversing arcs in the direction in which they point.
The \emph{underlying graph} $U(D)$ has vertex set $V(U(D))=V(D)$ and edge set $E(U(D))=\big\{\{u,v\} : uv\in E(D)\big\}$.
The \emph{adjacency matrix} $A(D)=[a_{ij}]$ of $D$ is the $n\times n$ matrix defined by
$a_{ij}=1$ if $v_iv_j\in E(D)$ and $a_{ij}=0$ otherwise.
We say that $D$ is \emph{$r$-out-regular} if every vertex has out-degree equal to $r$.
The \emph{out-degree matrix} (resp. \emph{in-degree matrix}) $D_{\textnormal{out}}(D)$ (resp. $D_{\textnormal{in}}(D)$) of $D$ is the diagonal matrix whose $i$-th diagonal entry equals the out-degree (resp. in-degree) of vertex $v_i$.
The \emph{Laplacian matrix} of $D$ is $L(D)=D_{\textnormal{out}}(D)-A(D)$
and the \emph{signless Laplacian matrix} of $D$ is $Q(D)=D_{\textnormal{out}}(D)+A(D)$. 
The \emph{characteristic polynomial} of an $n\times n$ matrix $M$ is $f_M(\lambda) = \det(\lambda I_n - M)$. 
When $M$ is the adjacency (resp.\ Laplacian or signless Laplacian) matrix of a digraph $D$, the roots of $f_M(\lambda)$ form the adjacency (resp.\ Laplacian or signless Laplacian) spectrum of $D$, and are often referred to as the $A$-spectrum (resp. $L$-spectrum or $Q$-spectrum).

The line digraph of $D$, denoted by $\mathcal{L}(D)$, has one vertex for each arc of $D$, and two vertices representing arcs from $u$ to $v$ and from $w$ to $x$ in $D$ are connected by an arc from $uv$ to $wx$ in the line digraph if and only if $v = w$.
For a digraph $D$ with $V(D)=\{v_1,\ldots,v_n\}$ and arcs ordered as $e_1,\ldots,e_m$, its \emph{in-incidence matrix} $B_{\textnormal{in}}(D)=[b_{ij}]$ is an $n\times m$ matrix such that $b_{ij}=1$ if $e_j=v_kv_i$ for some vertex $v_k$, and $b_{ij}=0$ otherwise.
Similarly, the \emph{out-incidence matrix} of $D$ is the $n\times m$ matrix  $B_{\textnormal{out}}(D)=[b_{ij}']$ such that $b_{ij}'=1$ if $e_j=v_iv_k$ for some vertex $v_k$, and $b_{ij}'=0$ otherwise.
The next lemma shows how the product of the in- and out-incidence matrices of a digraph relates to the adjacency matrix of the digraph and its line digraph.

\begin{lemma}\label[lemma]{lem:bb}{\rm\cite{Brualdi,MR2312328}}
Let $D$ be a digraph. Then $A(D) = B_{\textnormal{out}}(D) B_{\textnormal{in}}(D)^T$. Furthermore, the adjacency matrix of its line digraph $\mathcal{L}(D)$ satisfies $A(\mathcal{L}(D)) = B_{\textnormal{in}}(D)^T B_{\textnormal{out}}(D)$ and $\lambda^n f_{A(\mathcal{L}(D))}(\lambda)=\lambda^{m} f_{A(D)}(\lambda)$.
\end{lemma}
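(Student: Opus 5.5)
The plan is to prove the two matrix identities by computing entries directly from the definitions of the incidence matrices, and then to get the characteristic polynomial relation from the standard fact that $XY$ and $YX$ have the same nonzero eigenvalues with multiplicity.

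For the first identity, I would compute the $(i,j)$ entry of $B_{\textnormal{out}}(D)B_{\textnormal{in}}(D)^T$:
\[
\sum_{k=1}^m b'_{ik}b_{jk}.
\]
The term $b'_{ik}b_{jk}$ equals $1$ exactly when the arc $e_k$ has tail $v_i$ and head $v_j$, that is, when $e_k=v_iv_j$. Since $D$ is simple there are no multiple arcs, so at most one $k$ contributes. Hence the sum equals $1$ if $v_iv_j\in E(D)$ and $0$ otherwise. Loops are excluded, so the diagonal is zero, matching $A(D)$.

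For the second identity, I would compute the $(k,l)$ entry of $B_{\textnormal{in}}(D)^TB_{\textnormal{out}}(D)$:
\[
\sum_{i=1}^n b_{ik}b'_{il}.
\]
A term is nonzero exactly when $v_i$ is the head of $e_k$ and the tail of $e_l$. Each arc has a unique head, so at most one $i$ contributes, and the entry is $1$ precisely when the head of $e_k$ equals the tail of $e_l$. This is exactly the adjacency rule of $\mathcal{L}(D)$.

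One point needs care here: the diagonal entries. If $e_k=uv$, the $(k,k)$ entry is $1$ only if $v=u$, which cannot happen since $D$ has no loops. So the diagonal is zero, consistent with the definition of the line digraph. A $2$-cycle $uv,vu$ produces arcs in both directions between the corresponding vertices of $\mathcal{L}(D)$, and this is also consistent.

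For the polynomial identity, set $X=B_{\textnormal{out}}(D)$, which is $n\times m$, and $Y=B_{\textnormal{in}}(D)^T$, which is $m\times n$. Then $A(D)=XY$ and $A(\mathcal{L}(D))=YX$, and I would apply the Sylvester-type determinant identity
\[
\lambda^m\det(\lambda I_n-XY)=\lambda^n\det(\lambda I_m-YX).
\]
To prove it, I would take determinants in the block factorizations
\[
\begin{pmatrix} I_n & -X\\ 0 & \lambda I_m\end{pmatrix}\begin{pmatrix}\lambda I_n & X\\ Y & I_m\end{pmatrix}
\quad\text{and}\quad
\begin{pmatrix} I_n & 0\\ -Y & \lambda I_m\end{pmatrix}\begin{pmatrix}\lambda I_n & X\\ Y & I_m\end{pmatrix}.
\]
Both products are block triangular. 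The first has determinant $\lambda^m\det(\lambda I_n-XY)$ and the second has determinant $\lambda^n\det(\lambda I_m-YX)$, each being $\det\begin{pmatrix}\lambda I_n & X\\ Y & I_m\end{pmatrix}$ times the determinant of its left factor, and I would check the factors of $\lambda$ when verifying these. The identity holds as polynomials, either directly from this computation or by continuity from $\lambda\neq 0$.

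The main obstacle is mild. It is the bookkeeping in the block determinant step, where the powers of $\lambda$ must land correctly on each side, together with confirming that simplicity of $D$ is what makes the entries $0$/$1$ rather than counts.
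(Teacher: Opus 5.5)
Your proof is correct. The paper gives no proof of its own for this lemma and simply cites it from the literature, where the standard argument is the one you give: verify the two factorizations entrywise, then apply the $XY$/$YX$ determinant identity.

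The $\lambda$-bookkeeping you flag does work out. Both left factors have determinant $\lambda^m$, so both products have determinant $\lambda^m\det\begin{bmatrix}\lambda I_n & X\\ Y & I_m\end{bmatrix}$. This yields $\lambda^m\det(\lambda I_n-XY)=\lambda^n\det(\lambda I_m-YX)$ directly as a polynomial identity, with no continuity argument needed.
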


We use $I_n$ to denote the $n\times n$ identity matrix, $O_{n_1,n_2}$ to denote the $n_1\times n_2$ zero matrix, 
$J_{n_1,n_2}$ to denote the $n_1\times n_2$ matrix with every entry equal to one,
along with the shorthand $J_n$ to denote $J_{n,n}$, $O_n$ to denote $O_{n,n}$ and $\textbf{1}_n$ to denote $J_{n,1}$.

McLeman and McNicholas \cite{Mcleman} introduced the coronal of a graph in order to describe the spectrum of the corona product of two graphs. 
This concept has also been extended to the coronal of a matrix \cite{cui2012spectrum}.
In particular, for a real $n\times n$ matrix $M$ considered as a matrix over the field of rational functions $\mathbb{C}(\lambda)$ with $\det(\lambda I_n-M)$ nonzero, the \emph{$M$-coronal} $\chi_M(\lambda) \in \mathbb{C}(\lambda)$ of $M$ is defined to be the sum of the entries of the matrix $(\lambda I_n-M)^{-1}$, i.e., $\chi_M(\lambda) = \textbf{1}_n^T\big(\lambda I_n-M\big)^{-1}\textbf{1}_n$. 
In the case where $M$ is the adjacency (resp. Laplacian or signless Laplacian) matrix of a digraph $D$, we refer to the $M$-coronal as the $A$-coronal (resp. $L$-coronal and $Q$-coronal) of $D$.

Tools from matrix theory commonly used to study operations on graphs and digraphs include Schur complements, a consequence of Sylvester's determinant identity and the Sherman-Morrison-Woodbury formula. These are summarized in the following three lemmas.

\begin{lemma}\label[lemma]{lem:schur}{\rm\cite[Eqn.~(6.2.1)]{meyer2023matrix}}
Let $ M_1, M_2, M_3 $, and $ M_4 $ be respectively $ p \times p $, $ p \times q $, $ q \times p $, and $ q \times q $ matrices.
If $M_4$ is invertible, then
\[\det \begin{bmatrix} 
    M_1 & M_2 \\
    M_3 & M_4 
\end{bmatrix} = \det(M_4) \cdot \det \left( M_1 - M_2 M_4^{-1} M_3 \right).\]
\end{lemma}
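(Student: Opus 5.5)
The plan is to factor the block matrix as a product of two block-triangular matrices whose determinants are easy to read off, and then use multiplicativity of the determinant. Because $M_4$ is invertible, $M_4^{-1}$ exists. I will check directly, by block multiplication, the identity
\[
\begin{bmatrix} M_1 & M_2 \\ M_3 & M_4 \end{bmatrix}
=
\begin{bmatrix} I_p & M_2M_4^{-1} \\ O_{q,p} & I_q \end{bmatrix}
\begin{bmatrix} M_1 - M_2M_4^{-1}M_3 & O_{p,q} \\ M_3 & M_4 \end{bmatrix}.
\]
The top-left block of the product is $M_1 - M_2M_4^{-1}M_3 + M_2M_4^{-1}M_3 = M_1$. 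The top-right block is $M_2M_4^{-1}M_4 = M_2$. The bottom row simply reproduces $M_3$ and $M_4$. So the factorization holds.

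Next I take determinants of both sides. The first factor is block upper triangular with identity diagonal blocks, so its determinant is $1$. The second factor is block lower triangular, so its determinant is $\det(M_1 - M_2M_4^{-1}M_3)\cdot\det(M_4)$.

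The triangular-block fact is the standard result $\det\begin{bmatrix} X & O\\ Y & Z\end{bmatrix} = \det X\det Z$ for square $X$ and $Z$. I would cite it, or prove it by Laplace expansion along the first rows (inducting on $p$). Alternatively, I can factor once more: when $X$ and $Z$ are square,
\[
\begin{bmatrix} X & O\\ Y & Z\end{bmatrix}=\begin{bmatrix} I & O\\ O & Z\end{bmatrix}\begin{bmatrix} I & O\\ Y & I\end{bmatrix}\begin{bmatrix} X & O\\ O & I\end{bmatrix}.
\]
Each of these three factors has an obvious determinant by cofactor expansion.

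Multiplying the two determinants gives $\det(M_4)\det(M_1 - M_2M_4^{-1}M_3)$, which is the formula in \cref{lem:schur}. I do not expect a real obstacle here. The only point needing care is the block-triangular determinant fact, and that is routine.
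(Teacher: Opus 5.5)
Your main argument is correct. It is essentially the standard proof of this result: the paper does not prove the lemma itself and only cites Meyer's Eqn.~(6.2.1), which is established the same way, by a block triangular factorization plus multiplicativity of the determinant.

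There is one slip, in your optional alternative justification of the block-triangular fact. The three factors are in the wrong order. As you wrote them, the product equals $\begin{bmatrix} X & O\\ ZYX & Z\end{bmatrix}$, not $\begin{bmatrix} X & O\\ Y & Z\end{bmatrix}$. The correct identity is
$\begin{bmatrix} X & O\\ Y & Z\end{bmatrix}=\begin{bmatrix} X & O\\ O & I\end{bmatrix}\begin{bmatrix} I & O\\ Y & I\end{bmatrix}\begin{bmatrix} I & O\\ O & Z\end{bmatrix}$.
This does not affect the proof, since citing the fact or your Laplace-expansion induction already covers it.
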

 
\begin{lemma}\label[lemma]{sylv}{\rm\cite[Eqn.~(6.2.3)]{meyer2023matrix}}
Let $C$ be an $n\times n$ invertible matrix and $\alpha\in\mathbb{R}$. Then 
\[
\det(C+\alpha J_n)=\det(C)\,(1+\alpha\,\textbf{1}_n^TC^{-1}\textbf{1}_n).
\]
\end{lemma}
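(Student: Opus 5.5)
The identity in \cref{sylv} is a special case of the matrix determinant lemma, $\det(C+uv^T)=\det(C)\,(1+v^TC^{-1}u)$ for an invertible $C$ and column vectors $u,v$. I will take $u=\alpha\textbf{1}_n$ and $v=\textbf{1}_n$, so that $uv^T=\alpha J_n$, and prove the determinant lemma via \cref{lem:schur} applied to a suitable $(n+1)\times(n+1)$ bordered matrix.

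Consider the block matrix
\[
M=\begin{bmatrix} 1 & -\textbf{1}_n^T \\ \alpha\textbf{1}_n & C\end{bmatrix}.
\]
I will compute $\det M$ in two ways.

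First, take the lower-right block $C$ as the invertible pivot, so $M_1=1$, $M_2=-\textbf{1}_n^T$, $M_3=\alpha\textbf{1}_n$ and $M_4=C$. By \cref{lem:schur},
\[
\det M=\det(C)\,\bigl(1+\alpha\,\textbf{1}_n^TC^{-1}\textbf{1}_n\bigr).
\]

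Second, take the upper-left $1\times 1$ block as the pivot. This needs the Schur complement formula with the roles of the blocks swapped. That version follows from \cref{lem:schur} by conjugating $M$ with the permutation matrix that moves the first row and column to the end. Such a simultaneous row-and-column permutation leaves the determinant unchanged and turns $M$ into
\[
\begin{bmatrix} C & \alpha\textbf{1}_n\\ -\textbf{1}_n^T & 1\end{bmatrix}.
\]
Applying \cref{lem:schur} with $M_4=1$ gives
\[
\det M=1\cdot\det\bigl(C-\alpha\textbf{1}_n\cdot 1^{-1}\cdot(-\textbf{1}_n^T)\bigr)=\det(C+\alpha J_n).
\]

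Equating the two expressions for $\det M$ gives the claim. The only point needing care is the swapped-pivot step, namely checking that the permutation conjugation preserves the determinant and produces the stated Schur complement. This is routine. Alternatively, one could factor $M$ directly into block-triangular matrices using elementary row operations.
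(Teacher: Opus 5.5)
Your argument is correct: both Schur-complement evaluations of the bordered matrix are right (the cyclic permutation similarity preserves the determinant), and equating them gives the identity. The paper gives no proof of its own and only cites Meyer's Eqn.~(6.2.3), whose standard derivation uses the same bordered-matrix and block-factorization idea, so your route is essentially the standard one. Because your argument is purely algebraic, it also covers the paper's actual use of the lemma in the proof of \cref{cor:3by3}, where $\alpha$ is a rational function of $\lambda$ rather than a real number.
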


\begin{lemma}\label[lemma]{woodbury}{\rm\cite[Eqn.~(3.8.3)]{meyer2023matrix}}
Let $A$ be an invertible $n\times n$ matrix, 
and let $U$ and $V$ be $n\times m$ and $m\times n$ matrices, respectively.
Then
\[
\left(A+UV\right)^{-1}=A^{-1}-A^{-1}U\left(I_m+VA^{-1}U\right)^{-1}VA^{-1}.
\]
In particular, when $A=\lambda I_n$, we have
$
\left(\lambda I_n+UV\right)^{-1}=\frac{1}{\lambda} I_n-\frac{1}{\lambda }U\left(\lambda I_m+VU\right)^{-1}V.
$
\end{lemma}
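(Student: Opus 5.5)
The statement to prove is the Woodbury identity, together with its special case $A=\lambda I_n$. My plan is to verify the proposed inverse directly: multiply $A+UV$ by the right-hand side and show that the product is $I_n$. This implicitly assumes that $I_m+VA^{-1}U$ is invertible, since the formula only makes sense under that hypothesis. I will treat it as part of the statement. If needed, it can be justified through \cref{lem:schur}: applying Schur complements to
\[
\begin{bmatrix} A & -U\\ V & I_m\end{bmatrix}
\]
in both orders gives $\det(A+UV)=\det(A)\det(I_m+VA^{-1}U)$. So when $A$ and $A+UV$ are both invertible, $I_m+VA^{-1}U$ is invertible as well.

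For the main computation, write $S=I_m+VA^{-1}U$ and expand
\[
(A+UV)\bigl(A^{-1}-A^{-1}US^{-1}VA^{-1}\bigr)
= I_n + UVA^{-1} - US^{-1}VA^{-1} - UVA^{-1}US^{-1}VA^{-1}.
\]
The last two terms combine as $U\,(I_m+VA^{-1}U)\,S^{-1}VA^{-1}$, which equals $US\,S^{-1}VA^{-1}=UVA^{-1}$. This cancels the second term, leaving $I_n$. Since the matrices are square, a one-sided inverse is a two-sided inverse, and the general formula follows.

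For the special case, substitute $A=\lambda I_n$, so that $A^{-1}=\frac1\lambda I_n$. The right-hand side becomes
\[
\frac1\lambda I_n-\frac1{\lambda^2}U\Bigl(I_m+\tfrac1\lambda VU\Bigr)^{-1}V .
\]
Using $\bigl(I_m+\tfrac1\lambda VU\bigr)^{-1}=\lambda(\lambda I_m+VU)^{-1}$, this simplifies to
\[
\frac1\lambda I_n-\frac1\lambda U(\lambda I_m+VU)^{-1}V.
\]
Here $\lambda$ can be taken as a nonzero scalar for which $\lambda I_m+VU$ is invertible, or treated formally in $\mathbb{C}(\lambda)$ as in the definition of the coronal.

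There is no real obstacle in this argument. The only delicate point is the invertibility hypothesis on $S$, which the Schur-complement remark above handles.
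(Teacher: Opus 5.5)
Your proof is correct and is the standard one. The paper gives no proof of this lemma, only the citation to Meyer's Eqn.~(3.8.3), and direct multiplication is how that formula is verified; it needs the hypothesis that $I_m+VA^{-1}U$ is invertible, which the paper's statement leaves implicit and you rightly make explicit. The remaining points are cosmetic:
\begin{itemize}
\item The last two terms of your expansion combine to $-U(I_m+VA^{-1}U)S^{-1}VA^{-1}=-UVA^{-1}$; the minus sign is missing in your text.
\item Your ``other order'' Schur complement needs the version with the upper-left block $A$ invertible (or a block permutation), since \cref{lem:schur} only treats an invertible lower-right block.
\item In the paper's formal setting over $\mathbb{C}(\lambda)$, the invertibility of $\lambda I_m+VU$ is automatic, because its determinant is a monic polynomial of degree $m$.
\end{itemize}
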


The following result gives a relationship between the coronal (resp.\ characteristic polynomial) of an $n\times n$ matrix $M$ and that of a linear combination of $M$, $I_n$, and $J_n$.

\begin{lemma}\label[lemma]{cor_addI}{\rm\cite[Theorem~2.2]{cavers2025spectra}}
Let $M$ be an $n\times n$ matrix and $M'=aM+bJ_n+cI_n$, where $a,b,c\in\mathbb{R}$ and $a\neq 0$.
Then the $M'$-coronal and the characteristic polynomial of $M'$ are, respectively, 
\begin{enumerate}
    \item $\displaystyle\chi_{M'}(\lambda)= \frac{\chi_M\left(\tfrac{\lambda-c}{a}\right)}{a-b\, \chi_M\left(\tfrac{\lambda-c}{a}\right)}$,
    \item $\displaystyle f_{M'}(\lambda)= a^{n-1} f_M\left(\tfrac{\lambda-c}{a}\right)\left(a-b\, \chi_M\left(\tfrac{\lambda-c}{a}\right)\right).$
\end{enumerate} 
\end{lemma}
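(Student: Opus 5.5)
Both formulas follow by expressing $(\lambda I_n - M')^{-1}$ in terms of $(\mu I_n - M)^{-1}$, where $\mu=\tfrac{\lambda-c}{a}$. The rank-one term $bJ_n=b\,\textbf{1}_n\textbf{1}_n^T$ is then handled by \cref{sylv} for the determinant and by \cref{woodbury} (Sherman--Morrison form) for the coronal.

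First, write
\[
\lambda I_n - M' = \lambda I_n - aM - cI_n - bJ_n = a\big(\mu I_n - M\big) - b\,\textbf{1}_n\textbf{1}_n^T .
\]
Set $C=a(\mu I_n-M)$. All computations take place over $\mathbb{C}(\lambda)$, and $C$ is invertible there, since $\det C=a^n f_M(\mu)$ is a nonzero rational function. Then
\[
\textbf{1}_n^TC^{-1}\textbf{1}_n=\frac{1}{a}\chi_M(\mu).
\]

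For part 2, apply \cref{sylv} with $\alpha=-b$:
\[
f_{M'}(\lambda)=\det(C)\Big(1-\frac{b}{a}\chi_M(\mu)\Big)=a^n f_M(\mu)\cdot\frac{a-b\chi_M(\mu)}{a}.
\]
This equals $a^{n-1}f_M(\mu)\big(a-b\chi_M(\mu)\big)$, as claimed.

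For part 1, apply \cref{woodbury} with $A=C$, $U=-b\textbf{1}_n$ and $V=\textbf{1}_n^T$, giving
\[
(C-b\textbf{1}_n\textbf{1}_n^T)^{-1}=C^{-1}+\frac{b\,C^{-1}\textbf{1}_n\textbf{1}_n^TC^{-1}}{1-b\,\textbf{1}_n^TC^{-1}\textbf{1}_n}.
\]
This requires $1-b\,\textbf{1}_n^TC^{-1}\textbf{1}_n\neq 0$, which holds because part 2 shows this quantity times $\det C$ equals $f_{M'}(\lambda)\not\equiv 0$. Write $s=\textbf{1}_n^TC^{-1}\textbf{1}_n=\chi_M(\mu)/a$ and sandwich the identity between $\textbf{1}_n^T$ and $\textbf{1}_n$:
\[
\chi_{M'}(\lambda)=s+\frac{bs^2}{1-bs}=\frac{s}{1-bs}.
\]
Substituting $s=\chi_M(\mu)/a$ and clearing the factor $a$ gives $\chi_M(\mu)/\big(a-b\chi_M(\mu)\big)$.

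The only real obstacle is justifying the invertibility conditions needed for \cref{woodbury}, and this is settled as above. Everything else is routine algebra.
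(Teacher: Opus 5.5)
Your proof is correct. The paper gives no proof of its own, only the citation \cite[Theorem~2.2]{cavers2025spectra}, but your route is the natural one built from the tools the paper assembles for exactly this purpose: write $\lambda I_n - M' = a(\mu I_n - M) - b\,\mathbf{1}_n\mathbf{1}_n^T$ with $\mu=\tfrac{\lambda-c}{a}$, then treat the rank-one term with \cref{sylv} for the determinant and \cref{woodbury} for the coronal, and your use of part~2 to show $1-b\,\mathbf{1}_n^TC^{-1}\mathbf{1}_n\neq 0$ correctly supplies the invertibility hypothesis left implicit in \cref{woodbury}.
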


When $M$ has constant row sum, a closed-form expression for the $M$-coronal is known.

\begin{lemma}\label[lemma]{coronal_cal}{\rm\cite[Proposition 2]{cui2012spectrum}}
Let $M$ be an $n\times n$ matrix such that each row sum of $M$ is equal to a constant $t$. Then $\chi_M (\lambda) = n/(\lambda-t)$.
\end{lemma}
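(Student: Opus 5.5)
The statement to prove is \cref{coronal_cal}: if every row sum of $M$ equals $t$, then $\chi_M(\lambda)=n/(\lambda-t)$. I would show that $\textbf{1}_n$ is an eigenvector of $M$ for $t$. From this it is an eigenvector of $(\lambda I_n-M)^{-1}$ for $1/(\lambda-t)$, and the coronal follows by taking the inner product with $\textbf{1}_n$.

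\textbf{Step 1.} The constant row sum hypothesis says exactly that $M\textbf{1}_n=t\textbf{1}_n$. Hence
\[
(\lambda I_n-M)\textbf{1}_n=(\lambda-t)\textbf{1}_n.
\]
This is an identity over the field $\mathbb{C}(\lambda)$.

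\textbf{Step 2.} Since $\det(\lambda I_n-M)$ is a nonzero polynomial, $\lambda I_n-M$ is invertible over $\mathbb{C}(\lambda)$. Also $\lambda-t$ is a nonzero element of $\mathbb{C}(\lambda)$, since it is a nonconstant polynomial. Multiply the identity from Step 1 on the left by $(\lambda I_n-M)^{-1}$ and divide by $\lambda-t$. This gives
\[
(\lambda I_n-M)^{-1}\textbf{1}_n=\frac{1}{\lambda-t}\textbf{1}_n.
\]

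\textbf{Step 3.} Multiply on the left by $\textbf{1}_n^T$ and use $\textbf{1}_n^T\textbf{1}_n=n$. This yields $\chi_M(\lambda)=n/(\lambda-t)$.

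The only point needing care is Step 2: working over the rational function field, where both inversions are legitimate. If one prefers a numerical $\lambda$, the same argument holds for every $\lambda$ that is not an eigenvalue of $M$. Note that $t$ itself is an eigenvalue, so it is excluded. The identity of rational functions then follows, since they agree at infinitely many points. I expect no real obstacle.
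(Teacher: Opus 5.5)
Your proof is correct. The paper gives no in-line proof of this lemma; it is quoted from Cui and Tian. The paper does remark, right after \cref{coronalBC}, that the lemma is the special case $B=I_n$, $C=M$ of that proposition, whose proof rests on the Woodbury identity (\cref{woodbury}).

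Along that route one has $t_1=1$, $t_2=t$ and $BC=CB=M$. So \cref{coronalBC} gives the implicit relation $\chi_M(\lambda)=\frac{n}{\lambda}+\frac{t}{\lambda}\,\chi_M(\lambda)$, which is then solved for $\chi_M(\lambda)$ by dividing by $1-t/\lambda$ in $\mathbb{C}(\lambda)$.

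Your route is more direct. Because $\mathbf{1}_n$ is itself an eigenvector of $M$, you invert $\lambda I_n-M$ on it immediately, with no Woodbury expansion and no fixed-point equation to solve.

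The paper's route buys generality. In \cref{coronalBC} only the factors need to satisfy $\mathbf{1}_n^TB=t_1\mathbf{1}_m^T$ and $C\mathbf{1}_n=t_2\mathbf{1}_m$. The product $BC$ itself need not have constant row sums, for example $Q(G)=B(G)B(G)^T$ for a non-regular $G$. There your eigenvector argument does not apply, but the Woodbury computation still relates $\chi_{BC}$ to $\chi_{CB}$, and this is what yields \cref{linecorgraph} and \cref{linecor}.
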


\Cref{coronal_cal} has been extended to equitable partitions in \cite{cavers2025spectra}, where additional coronal computations for digraph complements, joins of digraphs and directed paths are also derived. 
We provide another extension of this lemma in the following proposition.

\begin{proposition}\label[proposition]{coronalBC}
Let $B$ be an $n\times m$ matrix such that each column sum of $B$ is equal to a constant $t_1$, and 
$C$ be an $m\times n$ matrix such that each row sum of $C$ is equal to a constant $t_2$.
Then
\[
\chi_{BC}(\lambda) 
= \frac{n}{\lambda} + \frac{t_1 t_2}{\lambda}\, \chi_{CB}(\lambda).
\]
\end{proposition}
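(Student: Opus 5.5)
We apply the special case of \cref{woodbury} with $A=\lambda I_n$, $U=-B$ and $V=C$. This gives, over $\mathbb{C}(\lambda)$,
\[
(\lambda I_n - BC)^{-1} = \frac{1}{\lambda} I_n + \frac{1}{\lambda} B\left(\lambda I_m - CB\right)^{-1} C .
\]
The identity needs only that $\lambda I_n - BC$ and $\lambda I_m - CB$ are invertible as matrices over the field of rational functions. This holds because both determinants are monic polynomials in $\lambda$, hence nonzero. If one prefers not to rely on the sign conventions in \cref{woodbury}, the identity can be checked directly by multiplying the right-hand side by $\lambda I_n - BC$. The key simplification in that check is $C(\lambda I_n - BC) = (\lambda I_m - CB)C$, so that
\[
B(\lambda I_m - CB)^{-1}C(\lambda I_n - BC) = BC .
\]

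Next we sandwich the identity between $\mathbf{1}_n^T$ and $\mathbf{1}_n$. The first term contributes $\mathbf{1}_n^T\mathbf{1}_n/\lambda = n/\lambda$. For the second term we use the two hypotheses. Since every column sum of $B$ equals $t_1$, we have $\mathbf{1}_n^T B = t_1 \mathbf{1}_m^T$. Since every row sum of $C$ equals $t_2$, we have $C\mathbf{1}_n = t_2 \mathbf{1}_m$. Hence the second term equals
\[
\frac{t_1 t_2}{\lambda}\, \mathbf{1}_m^T(\lambda I_m - CB)^{-1}\mathbf{1}_m = \frac{t_1t_2}{\lambda}\chi_{CB}(\lambda),
\]
which is the claimed formula.

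The only point needing care is the first step: getting the push-through/Woodbury identity with the correct signs, and confirming that the inverses exist in $\mathbb{C}(\lambda)$. Everything after that is a direct substitution using the row- and column-sum hypotheses.
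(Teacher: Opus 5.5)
Your proof is correct and is the paper's argument: the paper also applies the $A=\lambda I_n$ case of \cref{woodbury} with $U=-B$, $V=C$, and then substitutes $\mathbf{1}_n^TB=t_1\mathbf{1}_m^T$ and $C\mathbf{1}_n=t_2\mathbf{1}_m$. The paper does not include your extra details — the direct check via $C(\lambda I_n-BC)=(\lambda I_m-CB)C$ and the invertibility remark over $\mathbb{C}(\lambda)$ — but they are valid.
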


\begin{proof}
Since $\mathbf{1}_n^T B = t_1\,\mathbf{1}_m^T$ and $C\,\mathbf{1}_n = t_2\,\mathbf{1}_m$, it follows by \cref{woodbury} (taking $U=-B$ and $V=C$) that
\[
\chi_{BC}(\lambda) 
= \mathbf{1}_n^T (\lambda I_n - BC)^{-1} \mathbf{1}_n
= \frac{1}{\lambda}\mathbf{1}_n^T\mathbf{1}_n + \frac{t_1 t_2}{\lambda}\, \mathbf{1}_m^T\left(\lambda I_m - CB\right)^{-1}\mathbf{1}_m
= \frac{n}{\lambda} + \frac{t_1 t_2}{\lambda}\, \chi_{CB}(\lambda),
\] 
as required.
\end{proof}

Note that \cref{coronal_cal} is a special case of \cref{coronalBC} by taking $C$ to an $n\times n$ matrix with constant row sum and $B=I_n$. 
Given a graph $G$, \cref{coronalBC} gives the following relationship between the $Q$-coronal of $G$ and the $A$-coronal of its line graph $\mathcal{L}(G)$.

\begin{corollary}\label[corollary]{linecorgraph}
Let $G$ be a graph on $n$ vertices with $m$ edges, $Q(G)$ its signless Laplacian matrix and $A(\mathcal{L}(G))$ the adjacency matrix of its line graph.
Then
\[
\chi_{Q(G)}(\lambda)=\frac{n}{\lambda}+\frac{4}{\lambda}\chi_{A(\mathcal{L}(G))}(\lambda-2).
\]
\end{corollary}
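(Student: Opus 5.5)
We plan to apply \cref{coronalBC} to the unsigned vertex--edge incidence matrix of $G$, after rewriting $Q(G)$ as a product of that matrix with its transpose. Let $R$ denote the $n\times m$ vertex--edge incidence matrix of $G$, with $r_{ij}=1$ if vertex $v_i$ is an endpoint of edge $e_j$ and $r_{ij}=0$ otherwise. The proof then has three steps.

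First, we verify the standard identities
\[
Q(G)=RR^T \quad\text{and}\quad R^TR = A(\mathcal{L}(G))+2I_m .
\]
For the first identity, the $(i,i)$ entry of $RR^T$ counts the edges incident with $v_i$, which is $\deg v_i$. For $i\neq k$, the $(i,k)$ entry counts edges joining $v_i$ and $v_k$, which is $a_{ik}$ since $G$ is simple. Since $D_{\textnormal{out}}(G)$ is the degree matrix when $G$ is viewed as a symmetric digraph, this gives $Q(G)=D_{\textnormal{out}}+A$.

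For the second identity, the $(j,j)$ entry of $R^TR$ equals $2$, because each edge has two endpoints. For $j\neq \ell$, the $(j,\ell)$ entry is the number of common endpoints of $e_j$ and $e_\ell$. This is $1$ exactly when the two edges are adjacent in $\mathcal{L}(G)$, and $0$ otherwise, because distinct edges of a simple graph share at most one vertex.

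Second, we apply \cref{coronalBC} with $B=R$ and $C=R^T$. Every column of $R$ has sum $2$, since each edge has two endpoints, so $t_1=2$. Every row of $R^T$ is a column of $R$, so its row sum is also $2$, giving $t_2=2$. Hence
\[
\chi_{Q(G)}(\lambda)=\frac{n}{\lambda}+\frac{4}{\lambda}\chi_{R^TR}(\lambda).
\]

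Third, we translate the coronal of $R^TR$ into that of $A(\mathcal{L}(G))$. Since $\lambda I_m-(A(\mathcal{L}(G))+2I_m)=(\lambda-2)I_m-A(\mathcal{L}(G))$, we get $\chi_{R^TR}(\lambda)=\chi_{A(\mathcal{L}(G))}(\lambda-2)$ directly from the definition. Alternatively, this follows from \cref{cor_addI} with $a=1$, $b=0$, $c=2$. Substituting gives the claimed formula.

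The only point needing care is the identification $R^TR=A(\mathcal{L}(G))+2I_m$, which uses simplicity of $G$ so that off-diagonal entries are $0$ or $1$. There is also the degenerate case $m=0$. Then $Q(G)$ is the zero matrix, so $\chi_{Q(G)}(\lambda)=n/\lambda$, and the coronal of the empty line graph is the empty sum $0$, so the formula still holds. The rest is routine.
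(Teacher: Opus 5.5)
Your proposal is correct and follows essentially the same route as the paper's proof: factor $Q(G)=B(G)B(G)^T$ through the unsigned incidence matrix, apply \cref{coronalBC} with $t_1=t_2=2$, and identify $B(G)^TB(G)=A(\mathcal{L}(G))+2I_m$. The shift by $2$ is then read off the definition, or taken from \cref{cor_addI} as the paper does. Your separate check of the degenerate case $m=0$ is a harmless extra that the paper does not spell out.
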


\begin{proof}
Suppose $B(G)$ is the $n\times m$ incidence matrix of $G$.
Note the signless Laplacian matrix satisfies $Q(G)=B(G)B(G)^T$ and the line graph $\mathcal{L}(G)$ satisfies $A(\mathcal{L}(G))=B(G)^TB(G)-2I_m$.
Since each column sum of $B(G)$ and each row sum of $B(G)^T$ is equal to $2$, by \cref{coronalBC}, 
we have
\[
\chi_{Q(G)}(\lambda)
=\frac{n}{\lambda}+\frac{4}{\lambda}\chi_{A(\mathcal{L}(G))+2I_m}(\lambda).
\]
The result now follows as $\chi_{A(\mathcal{L}(G))+2I_m}(\lambda)=\chi_{A(\mathcal{L}(G))}(\lambda-2)$ by \cref{cor_addI}.
\end{proof}

Similarly, given a digraph $D$, \cref{coronalBC} can be used to obtain the following relationship between the $A$-coronal of $D$ and of its line digraph $\mathcal{L}(D)$.

\begin{corollary}\label[corollary]{linecor}
Let $D$ be a digraph on $n$ vertices and $\mathcal{L}(D)$ its line digraph.    
Then 
\[
\chi_{A(D)}(\lambda)=\frac{n}{\lambda}+\frac{1}{\lambda}\chi_{A(\mathcal{L}(D))}(\lambda).
\]
\end{corollary}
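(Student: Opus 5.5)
The plan is to apply \cref{coronalBC} directly to the factorization in \cref{lem:bb}, exactly as in the proof of \cref{linecorgraph}. Take $B=B_{\textnormal{out}}(D)$, which is $n\times m$, and $C=B_{\textnormal{in}}(D)^T$, which is $m\times n$, where $m=|E(D)|$. By \cref{lem:bb} we have
\[
BC=B_{\textnormal{out}}(D)B_{\textnormal{in}}(D)^T=A(D)
\qquad\text{and}\qquad
CB=B_{\textnormal{in}}(D)^TB_{\textnormal{out}}(D)=A(\mathcal{L}(D)).
\]

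The next step is to verify the hypotheses of \cref{coronalBC}.

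\textbf{Column sums of $B$.} The column of $B_{\textnormal{out}}(D)$ indexed by an arc $e_j=v_iv_k$ has a single entry $1$, in row $i$ (the tail of $e_j$). Hence every column sum equals $t_1=1$.

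\textbf{Row sums of $C$.} The rows of $C=B_{\textnormal{in}}(D)^T$ are the columns of $B_{\textnormal{in}}(D)$. The column indexed by $e_j=v_kv_i$ has a single $1$, in row $i$ (the head of $e_j$). Hence every row sum of $C$ equals $t_2=1$.

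Since $t_1t_2=1$, \cref{coronalBC} yields
\[
\chi_{A(D)}(\lambda)=\frac{n}{\lambda}+\frac{1}{\lambda}\,\chi_{A(\mathcal{L}(D))}(\lambda).
\]
This is an identity in $\mathbb{C}(\lambda)$, and both coronals are defined there because $\det(\lambda I-M)$ is a nonzero polynomial for any square $M$.

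There is no real obstacle. The only point needing care is matching the orientation conventions: the out-incidence matrix must supply the column-sum condition and the transposed in-incidence matrix the row-sum condition. Simplicity of $D$ (no loops) guarantees that each arc has exactly one tail and one head, so these sums are exactly $1$.
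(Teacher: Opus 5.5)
Your proposal is correct and matches the paper's proof: apply \cref{coronalBC} to the factorizations $A(D)=B_{\textnormal{out}}(D)B_{\textnormal{in}}(D)^T$ and $A(\mathcal{L}(D))=B_{\textnormal{in}}(D)^TB_{\textnormal{out}}(D)$ from \cref{lem:bb}, with $t_1=t_2=1$ because each arc puts a single $1$ in its column of each incidence matrix. The appeal to looplessness is unnecessary, since even a loop has exactly one tail and one head, but it does no harm.
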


\begin{proof}
This follows directly from \cref{lem:bb}, \cref{coronalBC} and the fact that every column of $B_{\textnormal{out}}(D)$ and every row of $B_{\textnormal{in}}(D)^T$ has exactly one $1$ and $0$'s otherwise.
\end{proof}

The following proposition and its corollary will be used in determining the spectrum of subdivision products of digraphs and follow directly from \cref{lem:schur} and properties of Kronecker products.

\begin{proposition}\label[proposition]{prop:M1-charpoly-ones}
Let $r,s,t\geq 1$, $\ell\in\{1,r\}$ and suppose
\[
M=
\begin{bmatrix}
A & B & \pm I_\ell\otimes J_{\frac{r}{\ell},t}\\
C & D & O_{s,\ell t}\\
\pm I_\ell\otimes J_{t,\frac{r}{\ell}} & O_{\ell t,s} & I_\ell\otimes E
\end{bmatrix},
\]
where $A$, $B$, $C$, $D$, and $E$ are 
$r\times r$, $r\times s$, $s\times r$, $s\times s$, and $t\times t$ real matrices, respectively.
Then 
\[
f_{M}(\lambda)
=f_D(\lambda) \; [f_E(\lambda)]^\ell \; 
\det\Bigl(
\lambda I_r - A - \chi_E(\lambda) \left(I_\ell\otimes J_{\frac{r}{\ell}}\right) - B(\lambda I_s - D)^{-1} C
\Bigr).
\]
\end{proposition}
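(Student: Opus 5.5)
The plan is to apply \cref{lem:schur} twice to $\lambda I - M$, peeling off first the block $I_\ell\otimes E$ and then the block $D$. Write
\[
\lambda I - M=
\begin{bmatrix}
\lambda I_r - A & -B & \mp I_\ell\otimes J_{\frac{r}{\ell},t}\\
-C & \lambda I_s - D & O_{s,\ell t}\\
\mp I_\ell\otimes J_{t,\frac{r}{\ell}} & O_{\ell t,s} & I_\ell\otimes(\lambda I_t-E)
\end{bmatrix}.
\]
We work over $\mathbb{C}(\lambda)$, so that $\lambda I_s-D$ and $\lambda I_t-E$ are invertible. The resulting identity between polynomials (after clearing denominators) then holds for every $\lambda$.

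First take $M_4=\lambda I_s-D \oplus I_\ell\otimes(\lambda I_t-E)$ in \cref{lem:schur}. This lower right block is block diagonal, so its determinant is $f_D(\lambda)[f_E(\lambda)]^\ell$, using $\det(I_\ell\otimes X)=(\det X)^\ell$. Its inverse is $(\lambda I_s-D)^{-1}\oplus I_\ell\otimes(\lambda I_t-E)^{-1}$. The Schur complement is therefore
\[
\lambda I_r - A - B(\lambda I_s-D)^{-1}C - \bigl(I_\ell\otimes J_{\frac{r}{\ell},t}\bigr)\bigl(I_\ell\otimes(\lambda I_t-E)^{-1}\bigr)\bigl(I_\ell\otimes J_{t,\frac{r}{\ell}}\bigr).
\]
The two signs $\mp$ multiply to $+$, since both off-diagonal blocks carry the same sign. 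In the cross term the $O$ blocks kill any mixing between $D$ and $E$.

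The only real computation is the last term. By the mixed-product property of Kronecker products it equals $I_\ell\otimes\bigl(J_{\frac{r}{\ell},t}(\lambda I_t-E)^{-1}J_{t,\frac{r}{\ell}}\bigr)$. Writing $J_{\frac{r}{\ell},t}=\mathbf{1}_{r/\ell}\mathbf{1}_t^T$, the inner product becomes $\mathbf{1}_{r/\ell}\bigl(\mathbf{1}_t^T(\lambda I_t-E)^{-1}\mathbf{1}_t\bigr)\mathbf{1}_{r/\ell}^T$, which is $\chi_E(\lambda)J_{\frac{r}{\ell}}$ by the definition of the coronal. Substituting this into the Schur complement gives exactly the stated determinant.

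The main point needing care is bookkeeping rather than any real obstacle. One must check that the sign combination gives $+\chi_E$ inside $-\chi_E(\cdot)$ consistently, and that the block sizes match for both choices $\ell=1$ and $\ell=r$. One must also justify working with rational functions: an identity holding for all $\lambda$ outside the finitely many eigenvalues of $D$ and $E$ extends to all $\lambda$ in polynomial form.
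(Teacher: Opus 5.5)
Your proof is correct and follows essentially the paper's route: you use a Schur complement via \cref{lem:schur}, and the Kronecker mixed-product identity collapses the cross term to $\chi_E(\lambda)\,(I_\ell\otimes J_{\frac{r}{\ell}})$. The only difference is that you eliminate the block-diagonal $(\lambda I_s-D)\oplus I_\ell\otimes(\lambda I_t-E)$ in a single step (despite announcing two steps), whereas the paper first removes the $E$-block and then applies \cref{lem:schur} again to remove $D$; both give the same identity.
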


\begin{proof}
By \cref{lem:schur}, we have
$\det(\lambda I-M)=\det(\lambda I_{\ell t} - I_\ell\otimes E)\det(S)$,
where
\[
S =
\begin{bmatrix}
\lambda I_r - A & -B \\
-C & \lambda I_s - D
\end{bmatrix}
-
\begin{bmatrix}
\pm I_\ell \otimes J_{\frac{r}{\ell},t} \\[1pt] O_{s,\ell t}
\end{bmatrix}
(\lambda I_{\ell t} - I_\ell \otimes E)^{-1}
\begin{bmatrix}
\pm I_\ell \otimes J_{t,\frac{r}{\ell}} & O_{\ell t,s}
\end{bmatrix}.
\]
For the first factor in $\det(\lambda I-M)$, observe that
\begin{equation}\label{eq:1}
\det(\lambda I_{\ell t} - I_\ell\otimes E) = [\det(\lambda I_t - E)]^\ell = [f_E(\lambda)]^\ell.
\end{equation}
We next simplify the expression for $S$ using properties of the Kronecker product, namely
\begin{align*}
(\pm I_\ell\otimes J_{\frac{r}{\ell},t}) 
(\lambda I_{\ell t} - I_\ell \otimes E)^{-1}
(\pm I_\ell \otimes J_{t,\frac{r}{\ell}})
&= (I_\ell \otimes J_{\frac{r}{\ell},t})
(I_\ell \otimes (\lambda I_t - E)^{-1})
(I_\ell \otimes J_{t,\frac{r}{\ell}}) \\[4pt]
&= I_\ell \otimes
\bigl(J_{\frac{r}{\ell},t}(\lambda I_t - E)^{-1}J_{t,\frac{r}{\ell}}\bigr) \\[4pt]
&= I_\ell \otimes
\bigl(\mathbf 1_{\frac{r}{\ell}}\mathbf 1_t^T
(\lambda I_t - E)^{-1}
\mathbf 1_t\mathbf 1_{\frac{r}{\ell}}^T\bigr) \\[4pt]
&= \chi_E(\lambda)\,(I_\ell \otimes J_{\frac{r}{\ell}}).
\end{align*}
Hence,
\[
S =
\begin{bmatrix}
\lambda I_r - A - \chi_E(\lambda)\left(I_\ell\otimes J_{\frac{r}{\ell}}\right) & -B \\
-C & \lambda I_s - D
\end{bmatrix},
\]
and by \cref{lem:schur}, 
\begin{equation}\label{eq:2}
\det(S) = \det(\lambda I_s - D) \;
\det\Bigl(\lambda I_r - A - \chi_E(\lambda)\left(I_\ell \otimes J_{\frac{r}{\ell}}\right) - B (\lambda I_s - D)^{-1} C \Bigr).
\end{equation}
Since $\det(\lambda I_s-D)=f_D(\lambda)$, multiplying \eqref{eq:1} and \eqref{eq:2} gives the required result.
\end{proof}

\begin{corollary}\label[corollary]{cor:3by3}
Let $r,s,t\geq 1$ and $\alpha,\beta,\gamma\in\mathbb{R}$.
Suppose
\[
M_1=\begin{bmatrix}
\alpha I_r & B & \pm J_{r,t}\\
C & \beta I_s & O_{s,t}\\
\pm J_{t,r} & O_{t,s} & \gamma I_t+F
\end{bmatrix},\qquad
M_2 = \begin{bmatrix}
\alpha I_r & B & O_{r,t} \\
C & \beta I_s & \pm J_{s,t} \\
O_{t,r} & \pm J_{t,s} & (\gamma I_t+F)
\end{bmatrix},
\]
\[
M_3=\begin{bmatrix}
\alpha I_r & B & \pm I_r\otimes J_{1,t}\\
C & \beta I_s & O_{s,rt}\\
\pm I_r\otimes J_{t,1} & O_{rt,s} & I_r\otimes (\gamma I_t+F)
\end{bmatrix}\quad\text{and}\quad
M_4=\begin{bmatrix}
\alpha I_r & B & O_{r,st} \\
C & \beta I_s & \pm I_s\otimes J_{1,t} \\
O_{st,r} & \pm I_s\otimes J_{t,1} & I_s\otimes (\gamma I_t+F)
\end{bmatrix},
\]
where $B$, $C$ and $F$ are $r\times s$, $s\times r$ and $t\times t$ real matrices, respectively.
Then 
\begin{enumerate}
    \item $\displaystyle f_{M_1}(\lambda)=(\lambda - \beta)^{s-r}\; f_F(\lambda-\gamma)\; f_{BC}\bigl((\lambda-\alpha)(\lambda-\beta)\bigr)\;\Bigl(1 - (\lambda-\beta)\,\chi_F(\lambda-\gamma)\, \chi_{BC}\bigl((\lambda-\alpha)(\lambda-\beta)\bigr) \Bigr)$,
    \item $\displaystyle f_{M_2}(\lambda)=(\lambda - \alpha)^{r-s}\; f_F(\lambda-\gamma)\; f_{CB}\bigl((\lambda-\alpha)(\lambda-\beta)\bigr)\;\Bigl(1 - (\lambda-\alpha)\,\chi_F(\lambda-\gamma)\, \chi_{CB}\bigl((\lambda-\alpha)(\lambda-\beta)\bigr) \Bigr)$,
    \item $\displaystyle f_{M_3}(\lambda)= (\lambda - \beta)^{s-r}\; [f_F(\lambda-\gamma)]^r\; f_{BC}\Bigl((\lambda-\beta)\bigl(\lambda-\alpha- \chi_F(\lambda-\gamma)\bigr)\Bigr)$,    
    \item $\displaystyle f_{M_4}(\lambda)= (\lambda - \alpha)^{r-s}\; [f_F(\lambda-\gamma)]^s\; 
f_{CB}\Bigl((\lambda-\alpha)\bigl(\lambda-\beta- \chi_F(\lambda-\gamma)\bigr)\Bigr)$.
\end{enumerate}
\end{corollary}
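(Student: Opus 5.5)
Each part will be obtained from \cref{prop:M1-charpoly-ones}, after possibly reordering the blocks, and then simplified with the Schur complement identity $\det(\lambda I_r-\alpha I_r-X-B(\lambda-\beta)^{-1}C)$, Sylvester's identity \cref{sylv}, and the shift rule of \cref{cor_addI}. With $E=\gamma I_t+F$, \cref{cor_addI} (taking $a=1$, $b=0$, $c=\gamma$) gives $f_E(\lambda)=f_F(\lambda-\gamma)$ and $\chi_E(\lambda)=\chi_F(\lambda-\gamma)$. Throughout we work over $\mathbb{C}(\lambda)$, so every needed inverse exists and determinants are rational identities; polynomial identities then follow.

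For $M_1$, apply \cref{prop:M1-charpoly-ones} with $\ell=1$, $A=\alpha I_r$ and $D=\beta I_s$. We obtain $f_M=(\lambda-\beta)^s f_F(\lambda-\gamma)\det(N)$, where
\[
N=(\lambda-\alpha)I_r-\chi_F(\lambda-\gamma)J_r-\tfrac{1}{\lambda-\beta}BC .
\]
Factoring out $\tfrac{1}{\lambda-\beta}$ gives $\det(N)=(\lambda-\beta)^{-r}\det\big(\mu I_r-BC-(\lambda-\beta)\chi_F(\lambda-\gamma)J_r\big)$ with $\mu=(\lambda-\alpha)(\lambda-\beta)$. By \cref{sylv} with $C\mapsto \mu I_r-BC$ and $\alpha\mapsto-(\lambda-\beta)\chi_F(\lambda-\gamma)$, this determinant equals $f_{BC}(\mu)\big(1-(\lambda-\beta)\chi_F(\lambda-\gamma)\chi_{BC}(\mu)\big)$, which yields part~1.

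For $M_3$, take $\ell=r$, so that $I_\ell\otimes J_{r/\ell}=I_r$. The $\chi$-term then merges into the diagonal, and the same rescaling gives $(\lambda-\beta)^{s-r}[f_F(\lambda-\gamma)]^r f_{BC}\big((\lambda-\beta)(\lambda-\alpha-\chi_F(\lambda-\gamma))\big)$ with no Sylvester step needed.

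For $M_2$ and $M_4$, conjugate by the permutation swapping the first two block rows and columns; this does not change the characteristic polynomial. The matrix then has the shape of \cref{prop:M1-charpoly-ones} with the roles $(r,\alpha,B)$ and $(s,\beta,C)$ interchanged: the top-left block is $\beta I_s$, the off-diagonal blocks are $C$ and $B$, and the middle block is $\alpha I_r$. Repeating the computations above with $CB$ in place of $BC$ gives parts~2 and~4.

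The main obstacle is the bookkeeping in the Sylvester step, namely checking that the scalar factor in front of $J$ after rescaling is exactly $(\lambda-\beta)\chi_F(\lambda-\gamma)$ and that the powers of $(\lambda-\beta)$ combine to $s-r$. Beyond that, one must note that the identities are proven as rational functions and then hold as stated, since both sides agree generically.
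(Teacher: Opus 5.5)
Your proposal is correct and follows the paper's proof essentially step for step. You specialize \cref{prop:M1-charpoly-ones} with $A=\alpha I_r$, $D=\beta I_s$, $E=\gamma I_t+F$ (using \cref{cor_addI} for $f_E$ and $\chi_E$), take $\ell=1$ together with \cref{sylv} for $M_1$, take $\ell=r$ for $M_3$, and conjugate by the permutation swapping the first two blocks to get $M_2$ and $M_4$. Your explicit remark that everything is carried out over $\mathbb{C}(\lambda)$ is a welcome clarification that the paper leaves implicit.
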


\begin{proof}
Let $A=\alpha I_r$, $D=\beta I_s$ and $E=\gamma I_t+F$ in \cref{prop:M1-charpoly-ones}, and note that $f_{D}(\lambda)=(\lambda-\beta)^s$, $f_{E}(\lambda)=f_F(\lambda-\gamma)$
and $\chi_{E}(\lambda)=\chi_F(\lambda-\gamma)$
(the last two equations follow by \cref{cor_addI}).
The formula for $f_{M_1}(\lambda)$ follows by letting $\ell = 1$ since then $I_\ell\otimes J_{\frac{r}{\ell}} = J_r$, and by \cref{sylv}, we obtain
\begin{align*}
\det\Bigl(&\lambda I_r - \alpha I_r - \chi_F(\lambda-\gamma) J_r 
- B (\lambda I_s - \beta I_s)^{-1} C \Bigr) \\[1mm]
&= (\lambda - \beta)^{-r} \, 
\det\Bigl((\lambda-\alpha)(\lambda-\beta) I_r - BC 
- (\lambda - \beta)\, \chi_F(\lambda-\gamma) J_r \Bigr) \\[1mm]
&= (\lambda - \beta)^{-r} \, 
f_{BC}\bigl((\lambda-\alpha)(\lambda-\beta)\bigr)\,\Bigl(1 - (\lambda-\beta)\,\chi_F(\lambda-\gamma) \, 
\chi_{BC}\bigl((\lambda-\alpha)(\lambda-\beta)\bigr) \Bigr).
\end{align*} 
The formula for $f_{M_3}(\lambda)$ follows by letting $\ell = r$ since then $I_\ell\otimes J_{\frac{r}{\ell}} = I_r$, and we obtain
\begin{align*}
\det\Bigl(&\lambda I_r - \alpha I_r - \chi_F(\lambda-\gamma) I_r - B (\lambda I_s - \beta I_s)^{-1} C \Bigr) \\[1mm]
&=(\lambda - \beta)^{-r} \, \det\Bigl((\lambda-\beta)\bigl(\lambda-\alpha- \chi_F(\lambda-\gamma)\bigr) I_r - BC \Bigr) \\[1mm]
&=(\lambda - \beta)^{-r} \, f_{BC}\Bigl((\lambda-\beta)\bigl(\lambda-\alpha- \chi_F(\lambda-\gamma)\bigr)\Bigr).
\end{align*}
For $f_{M_2}(\lambda)$ (resp. $f_{M_4}(\lambda)$), let $P=\begin{bmatrix}
O & I & O\\
I & O & O \\
O & O & I
\end{bmatrix}$ and observe that $P^TM_1P$ (resp. $P^TM_3P$) has the same structure as $M_2$ (resp. $M_4$) except with $\alpha I_r$ and $\beta I_s$ swapped, $B$ and $C$ swapped, and $r$ and $s$ swapped. 
Since $f_{P^TMP}(\lambda)=f_{M}(\lambda)$ for any square matrix $M$, the result follows by swapping the appropriate labels previously mentioned in the formula for $f_{M_1}(\lambda)$ (resp. $f_{M_3}(\lambda)$). 
\end{proof}

The formulas remain valid with the evident interpretation when one of the zero-dimensional blocks is absent.
 
Finally, we note the following relationship between the characteristic polynomials (resp. coronals) of the adjacency, Laplacian and signless-Laplacian matrices of an out-regular digraph along with the adjacency matrix of its line digraph.

\begin{proposition}\label[proposition]{prop:regular}
If $D$ is an $r$-out-regular digraph on $n$ vertices and $r>0$, then 
\[
\chi_{A(D)}(\lambda)
=\chi_{L(D)}(\lambda-r)
=\chi_{Q(D)}(\lambda+r)
=\frac{1}{r}\,\chi_{A(\mathcal{L}(D))}(\lambda)=\frac{n}{\lambda-r}
\]
and
\[
f_{A(D)}(\lambda) = (-1)^n f_{L(D)}(r-\lambda) = f_{Q(D)}(\lambda+r) = \lambda^{n(1-r)}f_{A(\mathcal{L}(D))}(\lambda).
\]
\end{proposition}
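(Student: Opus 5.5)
Both chains of identities come from the observation that every matrix involved is an affine expression in $A(D)$, $I_n$ and (for the line digraph) incidence matrices, so we can apply \cref{coronal_cal}, \cref{cor_addI} and \cref{lem:bb}. Since $D$ is $r$-out-regular, $D_{\textnormal{out}}(D)=rI_n$. Hence $L(D)=-A(D)+rI_n$ and $Q(D)=A(D)+rI_n$, and every row sum of $A(D)$ equals $r$.

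\textbf{Coronals.} First, \cref{coronal_cal} gives $\chi_{A(D)}(\lambda)=n/(\lambda-r)$. Next, apply \cref{cor_addI} with $M=A(D)$, $a=-1$, $b=0$, $c=r$. This yields $\chi_{L(D)}(\lambda)=-\chi_{A(D)}(r-\lambda)=n/(\lambda)$; to be safe I would instead note directly that $L(D)$ has row sums $0$ and $Q(D)$ has row sums $2r$. Then \cref{coronal_cal} gives $\chi_{L(D)}(\lambda-r)=n/(\lambda-r)$ and $\chi_{Q(D)}(\lambda+r)=n/(\lambda+r-2r)=n/(\lambda-r)$.

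For the line digraph, $A(\mathcal{L}(D))=B_{\textnormal{in}}(D)^TB_{\textnormal{out}}(D)$. Its row indexed by the arc $uv$ has sum equal to the out-degree of $v$, namely $r$, and there are $m=nr$ arcs. So \cref{coronal_cal} gives $\chi_{A(\mathcal{L}(D))}(\lambda)=nr/(\lambda-r)$, and dividing by $r$ (valid since $r>0$) gives $n/(\lambda-r)$. Alternatively, \cref{linecor} gives $\chi_{A(\mathcal{L}(D))}(\lambda)=\lambda\chi_{A(D)}(\lambda)-n=nr/(\lambda-r)$ directly.

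\textbf{Characteristic polynomials.} Here I would avoid \cref{cor_addI} and compute determinants directly. We have
\[
f_{L(D)}(r-\lambda)=\det\bigl((r-\lambda)I_n-rI_n+A(D)\bigr)=\det\bigl(-(\lambda I_n-A(D))\bigr)=(-1)^nf_{A(D)}(\lambda).
\]
Similarly, $f_{Q(D)}(\lambda+r)=\det\bigl((\lambda+r)I_n-rI_n-A(D)\bigr)=f_{A(D)}(\lambda)$. For the last equality, \cref{lem:bb} gives $\lambda^nf_{A(\mathcal{L}(D))}(\lambda)=\lambda^mf_{A(D)}(\lambda)$ with $m=nr$, so $f_{A(D)}(\lambda)=\lambda^{n-nr}f_{A(\mathcal{L}(D))}(\lambda)=\lambda^{n(1-r)}f_{A(\mathcal{L}(D))}(\lambda)$.

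The only step needing care is the arc count $m=nr$ together with the row-sum claim for $A(\mathcal{L}(D))$. Both follow from the definition of the line digraph and out-regularity. Everything else is routine substitution.
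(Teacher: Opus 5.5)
Your proposal is correct and follows essentially the paper's route: \cref{coronal_cal} (with \cref{linecor} or the row sums of $A(\mathcal{L}(D))$) for the coronals, and $L(D)=rI_n-A(D)$, $Q(D)=rI_n+A(D)$ together with \cref{lem:bb} and $m=nr$ for the characteristic polynomials. Your direct row-sum checks for $L(D)$ and $Q(D)$ and your direct determinant manipulations simply replace the paper's appeal to \cref{cor_addI}, and they are valid.
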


\begin{proof}
If $D$ is $r$-out-regular, then $D_{\textnormal{out}}(D)=rI_n$, and hence, $L(D)=rI_n-A(D)$ and $Q(D)=rI_n+A(D)$.
The result for the coronal now follows by \cref{coronal_cal} and \cref{linecor}, while the result for the characteristic polynomial follows by \cref{lem:bb} and \cref{cor_addI}.
\end{proof}

\section{Subdivision graphs and digraphs}\label[section]{sec:subdiv}
For a graph $G$ with $n$ vertices and $m$ edges, the \emph{subdivision graph} $\mathcal{S}(G)$ of $G$ is obtained by replacing each edge of $G$ by a path of length two (e.g., see \cite{cvetkovic1980spectra}).
We denote the set of new vertices by $I(G)$. 
Up to permutation similarity, the adjacency matrix of $\mathcal{S}(G)$ is 
\[
A(\mathcal{S}(G))=\begin{bmatrix} 
    O_n & B(G) \\
    B(G)^T & O_m
\end{bmatrix},
\]
where $B(G)$ is the $n\times m$ incidence matrix of $G$.
Since the signless Laplacian matrix satisfies $Q(G)=B(G)B(G)^T$ and the line graph $\mathcal{L}(G)$ satisfies $A(\mathcal{L}(G))=B(G)^TB(G)-2I_m$ (e.g., see \cite{brouwer2011spectra}), we obtain the following known relationship (e.g., see \cite{shinoda1980chara} and also  \cite{cvetkovic1980spectra} for the case when $G$ is a regular graph) between the 
$A$-spectrum of $\mathcal{S}(G)$, the $Q$-spectrum of $G$ and the $A$-spectrum of the line graph $\mathcal{L}(G)$: 
\begin{equation}\label{eq:QG}
f_{A(\mathcal{S}(G))}(\lambda)=\lambda^{m-n}f_{Q(G)}(\lambda^2)
=\lambda^{n-m}f_{A(\mathcal{L}(G))}(\lambda^2-2).
\end{equation}
In the case where $G$ is a regular graph, adjacency, Laplacian and signless Laplacian eigenvalues along with eigenvector information for $\mathcal{S}(G)$ can be found in \cite{BarikKalita}.

In contrast, let $D$ be a digraph with $n$ vertices and $m$ arcs.
The \emph{subdivision digraph} $\mathcal{S}(D)$ of $D$ is obtained by replacing each arc of $D$ by a directed path of length two (see \cite{Brualdi} and also \cite{DengKelmans2013} where the notation $D^{00+}$ is used).
Note that $\mathcal{S}(D)$ is strongly connected if and only if $D$ is strongly connected.
We denote the set of new vertices by $I(D)$; thus $(V(D),I(D))$ is a bipartition for $\mathcal{S}(D)$.
Up to permutation similarity, the adjacency matrix of $\mathcal{S}(D)$ is 
\[
A(\mathcal{S}(D))=\begin{bmatrix} 
    O_n & B_{\textnormal{out}}(D) \\
    B_{\textnormal{in}}(D)^T & O_m
\end{bmatrix}.
\]
Since $A(D)=B_{\textnormal{out}}(D)B_{\textnormal{in}}(D)^T$ (by \cref{lem:bb}), we obtain the following known (e.g., see \cite{DengKelmans2013,zhang1987characteristic}) relationship between the $A$-spectra of the digraphs $\mathcal{S}(D)$, $D$ and $\mathcal{L}(D)$: 
\[
f_{A(\mathcal{S}(D))}(\lambda)
=\lambda^{\,m-n}\,f_{A(D)}(\lambda^2)
=\lambda^{\,n-m}\,f_{A(\mathcal{L}(D))}(\lambda^2).
\]
In particular, the adjacency spectrum of $\mathcal{S}(D)$ is completely determined by the adjacency spectrum of $D$.

We emphasize that subdivision in the directed setting differs from the undirected case in the following sense: 
If $D$ is a symmetric digraph (i.e., $A(D)$ is a symmetric matrix) with underlying graph $G=U(D)$, then each symmetric arc of $D$ (that is, each $2$-cycle) contributes two new vertices to $\mathcal{S}(D)$, one for each arc, whereas each edge of $G$ contributes a single new vertex to $\mathcal{S}(G)$.

\section{Subdivision-join products}\label[section]{sec:sub-join}
For graphs $G_1$ and $G_2$, the \emph{subdivision-vertex join} $G_1\svj G_2$ is the graph obtained from $\mathcal{S}(G_1)$ by joining every original vertex of $G_1$ to every vertex of $G_2$, and the \emph{subdivision-edge join} $G_1\saj G_2$ is obtained from $\mathcal{S}(G_1)$ by joining every newly inserted vertex of $\mathcal S(G_1)$ to every vertex of $G_2$.
These two graph operations were introduced by Indulal \cite{Indulal} who computed their adjacency spectrum when both $G_1$ and $G_2$ are regular.
Using coronals, Liu and Zhang~\cite{Liu} extended this work by allowing $G_2$ to be arbitrary while still assuming that $G_1$ is regular, and they computed the adjacency, Laplacian, and signless Laplacian spectra of $G_1\svj G_2$ and $G_1\saj G_2$.
Later, Pavithra and Rajkumar~\cite{Pavithra2021} considered the case where $G_1$ is the join of two graphs with $G_2$ arbitrary in the subdivision-vertex join $G_1\svj G_2$.

Before extending these operations to the directed setting, we first establish a result for graphs that describes the $A$-spectrum of $G_1\svj G_2$ (resp. $G_1\saj G_2$) when both factors $G_1$ and $G_2$ are arbitrary. 

\begin{theorem}
Let $G_1$ and $G_2$ be simple graphs on $n_1$ and $n_2$ vertices, and $m_1$ and $m_2$ edges, respectively. 
Then
\[
f_{A(G_1\svj G_2)}(\lambda)
= \lambda^{n_1-m_1-1}
\,f_{A(\mathcal{L}(G_1))}(\lambda^2-2)
\,f_{A(G_2)}(\lambda)
\Bigl[\lambda-\Bigl(n_1+4\,\chi_{A(\mathcal{L}(G_1))}(\lambda^2-2)\Bigr)\chi_{A(G_2)}(\lambda)\Bigr],
\]
and
\[
f_{A(G_1\saj G_2)}(\lambda)
= \lambda^{n_1 - m_1}\,
f_{A(\mathcal{L}(G_1))}(\lambda^2-2)\,
f_{A(G_2)}(\lambda)\, \Bigl( 1 - \lambda \, \chi_{A(\mathcal{L}(G_1))}(\lambda^2-2)\,
\chi_{A(G_2)}(\lambda) 
 \Bigr),
\]
where $\mathcal{L}(G_1)$ is the line graph of $G_1$.
In particular, 
the $A$-spectrum of $G_1\saj G_2$ (resp. $G_1\svj G_2$) is completely determined by the $A$-spectra of $\mathcal{L}(G_1)$, and $G_2$ and the $A$-coronals of $\mathcal{L}(G_1)$ and $G_2$.
\end{theorem}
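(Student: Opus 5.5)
We will read both adjacency matrices as instances of \cref{cor:3by3} with $\alpha=\beta=\gamma=0$ and $F=A(G_2)$. Then we convert the resulting signless-Laplacian and Gram-matrix data of the incidence matrix into line-graph data using \eqref{eq:QG}, \cref{linecorgraph} and \cref{cor_addI}. Let $B=B(G_1)$ be the $n_1\times m_1$ incidence matrix of $G_1$. We order the vertices of the product as $V(G_1)$, then $I(G_1)$, then $V(G_2)$.

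For the subdivision-vertex join, the adjacency matrix is
\[
A(G_1\svj G_2)=\begin{bmatrix} O_{n_1} & B & J_{n_1,n_2}\\ B^T & O_{m_1} & O_{m_1,n_2}\\ J_{n_2,n_1} & O_{n_2,m_1} & A(G_2)\end{bmatrix}.
\]
This is $M_1$ with $r=n_1$, $s=m_1$, $t=n_2$ and $C=B^T$. Since $BB^T=Q(G_1)$, part 1 of \cref{cor:3by3} gives
\[
f_{A(G_1\svj G_2)}(\lambda)=\lambda^{m_1-n_1}f_{A(G_2)}(\lambda)\,f_{Q(G_1)}(\lambda^2)\Bigl(1-\lambda\,\chi_{A(G_2)}(\lambda)\,\chi_{Q(G_1)}(\lambda^2)\Bigr).
\]
Next we use \eqref{eq:QG}, which rewrites as $f_{Q(G_1)}(\lambda^2)=\lambda^{2(n_1-m_1)}f_{A(\mathcal{L}(G_1))}(\lambda^2-2)$; this turns the power of $\lambda$ into $\lambda^{n_1-m_1}$. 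By \cref{linecorgraph} evaluated at $\lambda^2$,
\[
\chi_{Q(G_1)}(\lambda^2)=\frac{n_1+4\chi_{A(\mathcal{L}(G_1))}(\lambda^2-2)}{\lambda^2}.
\]
Hence the last factor becomes $\frac{1}{\lambda}\bigl[\lambda-(n_1+4\chi_{A(\mathcal{L}(G_1))}(\lambda^2-2))\chi_{A(G_2)}(\lambda)\bigr]$. Absorbing the $1/\lambda$ gives the exponent $n_1-m_1-1$, which is the first formula.

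For the subdivision-edge join, the all-ones blocks connect $I(G_1)$ with $V(G_2)$:
\[
A(G_1\saj G_2)=\begin{bmatrix} O_{n_1} & B & O_{n_1,n_2}\\ B^T & O_{m_1} & J_{m_1,n_2}\\ O_{n_2,n_1} & J_{n_2,m_1} & A(G_2)\end{bmatrix}.
\]
This is $M_2$, so part 2 of \cref{cor:3by3} gives
\[
f_{A(G_1\saj G_2)}(\lambda)=\lambda^{n_1-m_1}f_{A(G_2)}(\lambda)\,f_{B^TB}(\lambda^2)\bigl(1-\lambda\chi_{A(G_2)}(\lambda)\chi_{B^TB}(\lambda^2)\bigr).
\]
Since $B^TB=A(\mathcal{L}(G_1))+2I_{m_1}$, \cref{cor_addI} (with $a=1$, $b=0$, $c=2$) gives $f_{B^TB}(\lambda^2)=f_{A(\mathcal{L}(G_1))}(\lambda^2-2)$ and $\chi_{B^TB}(\lambda^2)=\chi_{A(\mathcal{L}(G_1))}(\lambda^2-2)$. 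Substituting these yields the second formula directly.

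The main point needing care is that all identities are equalities of rational functions, valid for $\lambda$ outside the finitely many poles, and then they extend to polynomial identities. In particular, \cref{cor:3by3} and the coronal substitutions require $\lambda\neq0$ and $\lambda^2$ not an eigenvalue of $Q(G_1)$. Both sides of the final formulas are polynomials, or they agree on a cofinite set, so the identities hold everywhere. Another bookkeeping step is the power of $\lambda$ in the vertex-join case, which comes from combining the exponent $m_1-n_1$, the exponent $2(n_1-m_1)$ from \eqref{eq:QG}, and the extra $1/\lambda$. The final spectral statement is immediate from the two displayed formulas.
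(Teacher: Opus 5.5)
Your proposal is correct and follows the paper's proof: the paper likewise reads the two adjacency matrices as $M_1$ and $M_2$ in \cref{cor:3by3} with $\alpha=\beta=\gamma=0$ and $F=A(G_2)$, then converts the $Q(G_1)=BB^T$ data (respectively the $B^TB=A(\mathcal{L}(G_1))+2I_{m_1}$ data) into line-graph data via \eqref{eq:QG}, \cref{linecorgraph} and \cref{cor_addI}. Your exponent bookkeeping $(m_1-n_1)+2(n_1-m_1)-1=n_1-m_1-1$ and the evaluation of \cref{linecorgraph} at $\lambda^2$ are both correct.
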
 

\begin{proof}
Up to permutation similarity and with respect to the vertex partition $(V(G_1), I(G_1), V(G_2))$, the adjacency matrices of
$G_1\svj G_2$ and $G_1\saj G_2$ are
\[
A(G_1\svj G_2)=
\begin{bmatrix}
O_{n_1} & B(G_1) & J_{n_1,n_2} \\
B(G_1)^T & O_{m_1} & O_{m_1,n_2} \\
J_{n_2,n_1} & O_{n_2,m_1} & A(G_2)
\end{bmatrix},\quad\text{and}\quad
A(G_1\saj G_2)=
\begin{bmatrix}
O_{n_1} & B(G_1) &  O_{n_1,n_2}\\
B(G_1)^T & O_{m_1} & J_{m_1,n_2} \\
O_{n_2,n_1} & J_{n_2,m_1} & A(G_2)
\end{bmatrix},
\]
respectively, where $B(G_1)$ is the incidence matrix of $G_1$.
The result now follows by \cref{cor:3by3}(1) (resp. \cref{cor:3by3}(2)) and using $Q(G_1)=B(G_1)B(G_1)^T$ (resp. $A(\mathcal{L}(G_1))=B(G_1)^TB(G_1)-2I_{m_1}$), along with \cref{cor_addI}, \eqref{eq:QG} and \cref{linecorgraph}.
\end{proof}

\subsection{The subdivision-vertex join of two digraphs} 

Let $D_1$ and $D_2$ be digraphs.  
The \emph{(symmetric) subdivision-vertex join} $D_1 \svj D_2$ is obtained by taking $\mathcal{S}(D_1)$ and $D_2$, and adding both arcs $v w$ and $w v$ for every original vertex $v \in V(D_1)$ and every vertex $w \in V(D_2)$ (see \cref{fig:P2subvjoin}(b)).  
One can similarly define the forward (only arcs $v w$) or backward (only arcs $w v$) subdivision-vertex joins, however, these are not strongly connected and are therefore not considered here.

\begin{figure}[ht]
\centering
\hspace*{-0.5cm}
\begin{subfigure}{0.32\textwidth}
\centering
\scalebox{0.95}{
\raisebox{2.2cm}{
\begin{tikzpicture}[baseline=(v2.base), >=Stealth, thick, scale=1, vertex/.style={circle, fill, inner sep=2pt}, 
ivertex/.style={circle, draw, inner sep=2pt},
midarrow/.style={line width=1.3pt, postaction={decorate, decoration={markings, mark=at position 0.599 with {\arrow{Stealth[length=3mm,width=3mm]}}}}}, 
doublearrow/.style={line width=2.0pt, red, dash pattern=on 1pt off 1pt}]
\usetikzlibrary{decorations.markings,arrows.meta}
\draw[blue, dashed] (-0.4,-0.4) rectangle (3.4,0.4)
                    (0.3,1.6) rectangle (2.7,2.4);
\node[left] at (-0.5,0) {$V(D_1)$};
\node[left] at (0.2,2) {$I(D_1)$};
\node[vertex] (v1) at (0,0) {};
\node[vertex] (v2) at (1.5,0) {};
\node[vertex] (v3) at (3,0) {};
\node[ivertex] (i1) at (0.75,2) {};
\node[ivertex] (i2) at (2.25,2) {};
\foreach \a/\b in {v1/i1, i1/v2, v2/i2, i2/v3}
  \draw[midarrow] (\a) -- (\b);
\end{tikzpicture}}}
\caption{The subdivision digraph $\mathcal{S}(D_1)$}
\end{subfigure}
\hfill
\begin{subfigure}{0.32\textwidth}
\centering
\scalebox{0.95}{
\raisebox{1.0cm}{
\begin{tikzpicture}[baseline=(v2.base), >=Stealth, thick, scale=1, vertex/.style={circle, fill, inner sep=2pt}, 
ivertex/.style={circle, draw, inner sep=2pt},
midarrow/.style={line width=1.3pt, postaction={decorate, decoration={markings, mark=at position 0.59 with {\arrow{Stealth[length=3mm,width=3mm]}}}}}, 
doublearrow/.style={line width=2.0pt, red, dash pattern=on 1pt off 1pt}]
\usetikzlibrary{decorations.markings,arrows.meta}
\draw[blue, dashed] (-0.4,-0.4) rectangle (3.4,0.4)
                    (0.3,1.6) rectangle (2.7,2.4);
\node[left] at (-0.5,0) {$V(D_1)$};
\node[left] at (0.2,2) {$I(D_1)$};
\node[vertex] (v1) at (0,0) {};
\node[vertex] (v2) at (1.5,0) {};
\node[vertex] (v3) at (3,0) {};
\node[ivertex] (i1) at (0.75,2) {};
\node[ivertex] (i2) at (2.25,2) {};
\coordinate (d2) at (1.5,-1.7);
\foreach \a/\b in {v1/i1, i1/v2, v2/i2, i2/v3}
  \draw[midarrow] (\a) -- (\b);
\coordinate (e1) at ($(d2)+(-1.2+0.15,0.26)$);
\coordinate (e2) at ($(d2)+(1.2-0.15,0.26)$);
\foreach \a in {v1,v2,v3}
  \draw[doublearrow] (\a) -- (e1)
                     (\a) -- (e2);
\node[draw, ellipse, line width=1.3pt, minimum width=2.4cm, minimum height=1cm] at (d2) {$D_2$};
\end{tikzpicture}}}
\caption{The subdivision-vertex join $D_1\svj D_2$}
\end{subfigure}
\hfill
\begin{subfigure}{0.32\textwidth}
\centering
\scalebox{0.95}{
\raisebox{2.2cm}{
\begin{tikzpicture}[baseline=(v2.base), >=Stealth, thick, scale=1, vertex/.style={circle, fill, inner sep=2pt}, 
ivertex/.style={circle, draw, inner sep=2pt},
midarrow/.style={line width=1.3pt, postaction={decorate, decoration={markings, mark=at position 0.59 with {\arrow{Stealth[length=3mm,width=3mm]}}}}}, 
doublearrow/.style={line width=2.0pt, red, dash pattern=on 1pt off 1pt}]
\usetikzlibrary{decorations.markings,arrows.meta}
\draw[blue, dashed] (-0.4,-0.4) rectangle (3.4,0.4)
                    (0.3,1.6) rectangle (2.7,2.4);
\node[left] at (-0.5,0) {$V(D_1)$};
\node[left] at (0.2,2) {$I(D_1)$};
\node[vertex] (v1) at (0,0) {};
\node[vertex] (v2) at (1.5,0) {};
\node[vertex] (v3) at (3,0) {};
\node[ivertex] (i1) at (0.75,2) {};
\node[ivertex] (i2) at (2.25,2) {};
\coordinate (d2) at (1.5,3.4);
\foreach \a/\b in {v1/i1, i1/v2, v2/i2, i2/v3}
  \draw[midarrow] (\a) -- (\b);
\coordinate (e1) at ($(d2)+(-1.2+0.15,-0.26)$);
\coordinate (e2) at ($(d2)+( 1.2-0.15,-0.26)$);
\foreach \a in {i1,i2}
  \draw[doublearrow] (\a) -- (e1)
                     (\a) -- (e2);
\node[draw, ellipse, line width=1.3pt, minimum width=2.4cm, minimum height=1cm] at (d2) {$D_2$};
\end{tikzpicture}}}
\caption{The subdivision-arc join $D_1\saj D_2$}
\end{subfigure}
\caption{Above, $D_1=P_3$ (the directed path on three vertices) and $D_2$ is an arbitrary digraph. The dashed (red) lines represent all possible arcs in both directions between vertices in $V(D_2)$ and vertices in $V(D_1)$ (for $D_1\svj D_2$) or $I(D_1)$ (for $D_1\saj D_2$).}
\label{fig:P2subvjoin}
\end{figure}
 
In the graph case, the subdivision-vertex join $G_1\svj G_2$ is always connected, and an analogous statement holds for digraphs.

\begin{proposition}
Let $D_1$ and $D_2$ be digraphs.
Then $D_1\svj D_2$ is strongly connected.    
\end{proposition}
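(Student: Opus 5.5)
The plan is to show directly that every vertex of $D_1\svj D_2$ can reach every other vertex. The symmetric arcs between $V(D_1)$ and $V(D_2)$ make this quick for original vertices. The only real care is needed for the subdivision vertices in $I(D_1)$.

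First, consider the set $V(D_1)\cup V(D_2)$. Both $D_1$ and $D_2$ are nonempty (a standing assumption in \cref{sec:prelim}), so fix some $w_0\in V(D_2)$ and some $v_0\in V(D_1)$. For any $v,v'\in V(D_1)$, the directed path $v\to w_0\to v'$ uses arcs $vw_0$ and $w_0v'$ that were added in the join. For any $w,w'\in V(D_2)$, the path $w\to v_0\to w'$ works the same way, and each $v\in V(D_1)$ and $w\in V(D_2)$ are joined in both directions by a single arc. Hence all vertices of $V(D_1)\cup V(D_2)$ lie in a single strong component.

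Next, take a subdivision vertex $x\in I(D_1)$. It was inserted on some arc $uv\in E(D_1)$, so $\mathcal{S}(D_1)$ contains the directed path $u\to x\to v$ with $u,v\in V(D_1)$. Therefore $x$ is reachable from $u$, and $v$ is reachable from $x$. Combined with the previous paragraph, $x$ can reach, and be reached from, every vertex of $V(D_1)\cup V(D_2)$. Finally, any two subdivision vertices $x,x'$ communicate by going from $x$ to its head vertex, then to the tail vertex of $x'$, then to $x'$. This covers all pairs, so $D_1\svj D_2$ is strongly connected.

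There is no real obstacle; the one point to state explicitly is that every subdivision vertex has exactly one in-neighbour and one out-neighbour, both in $V(D_1)$. This holds by the construction of $\mathcal{S}(D_1)$, and it is what makes the argument independent of whether $D_1$ itself is strongly connected. If $D_1$ has no arcs, then $I(D_1)=\emptyset$ and the second step is vacuous.
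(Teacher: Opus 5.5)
Your proof is correct and follows essentially the same route as the paper. The symmetric complete bipartite join makes $V(D_1)\cup V(D_2)$ strongly connected, and each subdivision vertex has an in-neighbour and an out-neighbour in $V(D_1)$, so it both reaches and is reached from that set; your remarks on nonemptiness and on the case $I(D_1)=\emptyset$ just make explicit what the paper leaves implicit.
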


\begin{proof}
Let $D=D_1\svj D_2$.
Observe that $V(D)=I(D_1)\cup V(D_1)\cup V(D_2)$ and every vertex in $V(D_1)$ is connected to every vertex in $V(D_2)$ by a $2$-cycle in $D$, i.e., $D$ has a symmetric complete bipartite digraph as a subdigraph with bipartition $(V(D_1),V(D_2))$. 
Thus, for any $u,v\in V(D_1)\cup V(D_2)$, there is a directed path from $u$ to $v$ in $D$. 
Since every vertex $w\in I(D_1)$ is the head (resp. tail) of an arc whose other endpoint is in $V(D_1)$, every vertex of $I(D_1)$ both reaches and is reachable from $V(D_1)$, and hence, both reaches and is reachable from every vertex of $D$. 
Therefore, $D$ is strongly connected.
\end{proof}

\begin{theorem}\label[theorem]{thm:sub-vertex}
Let $D_1$ and $D_2$ be two digraphs on $n_1$ and $n_2$ vertices, and $m_1$ and $m_2$ arcs, respectively. Then
$$\displaystyle f_{A(D_1\svj D_2)}(\lambda)=\lambda^{m_1-n_1}\, f_{A(D_1)}(\lambda^2)\, f_{A(D_2)}(\lambda)\,
    \left[1-\lambda\;\chi_{A(D_1)}(\lambda^2)\;\chi_{A(D_2)}(\lambda)\right].$$
In particular, the $A$-spectrum of $D_1\svj D_2$ is completely determined by the $A$-spectra of $D_1$ and $D_2$, and the $A$-coronals of $D_1$ and $D_2$.   
\end{theorem}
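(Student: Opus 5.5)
Up to permutation similarity, and with respect to the vertex partition $(V(D_1), I(D_1), V(D_2))$, the adjacency matrix of $D_1\svj D_2$ is
\[
A(D_1\svj D_2)=
\begin{bmatrix}
O_{n_1} & B_{\textnormal{out}}(D_1) & J_{n_1,n_2} \\
B_{\textnormal{in}}(D_1)^T & O_{m_1} & O_{m_1,n_2} \\
J_{n_2,n_1} & O_{n_2,m_1} & A(D_2)
\end{bmatrix}.
\]
The arcs of $\mathcal{S}(D_1)$ go from an original vertex to the new vertex on its arc (through $B_{\textnormal{out}}$), and from the new vertex to the head (through $B_{\textnormal{in}}^T$). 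The join arcs are symmetric, so both off-diagonal $J$ blocks appear. This matrix has exactly the form $M_1$ of \cref{cor:3by3}, with $r=n_1$, $s=m_1$, $t=n_2$, $\alpha=\beta=\gamma=0$, $B=B_{\textnormal{out}}(D_1)$, $C=B_{\textnormal{in}}(D_1)^T$, $F=A(D_2)$, and the plus sign.

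Next I apply \cref{cor:3by3}(1). It gives
\[
f(\lambda)=\lambda^{m_1-n_1}\, f_{A(D_2)}(\lambda)\, f_{BC}(\lambda^2)\,\bigl(1-\lambda\,\chi_{A(D_2)}(\lambda)\,\chi_{BC}(\lambda^2)\bigr).
\]
By \cref{lem:bb}, $BC=B_{\textnormal{out}}(D_1)B_{\textnormal{in}}(D_1)^T=A(D_1)$. Hence $f_{BC}(\lambda^2)=f_{A(D_1)}(\lambda^2)$ and $\chi_{BC}(\lambda^2)=\chi_{A(D_1)}(\lambda^2)$, which yields the stated formula. The "in particular" clause is then immediate, since every factor involves only $f_{A(D_i)}$ and $\chi_{A(D_i)}$.

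The only point needing care is the validity of the identity as polynomials. \cref{cor:3by3} is derived through Schur complements, which require $\lambda I - F$ and $\lambda I_s-\beta I_s$ to be invertible. I will therefore read the identity over the field $\mathbb{C}(\lambda)$ of rational functions, where these matrices are invertible. Both sides are then equal as rational functions, and the left side is a polynomial, so the equality holds as polynomials. I will also check that the block orientation matches the definition of $\mathcal{S}(D_1)$. If $B$ and $C$ were swapped, one would get $CB=A(\mathcal{L}(D_1))$ instead. That would still be consistent with the theorem by \cref{lem:bb} and \cref{linecor}, but the direct identification $BC=A(D_1)$ is cleaner.
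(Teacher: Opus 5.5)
Your proof is correct and is essentially the paper's own argument. The paper uses the same block form of $A(D_1\svj D_2)$, applies \cref{cor:3by3}(1) with $\alpha=\beta=\gamma=0$, $B=B_{\textnormal{out}}(D_1)$, $C=B_{\textnormal{in}}(D_1)^T$, $F=A(D_2)$, and identifies $BC=A(D_1)$ via \cref{lem:bb}; your remark about reading the identity over $\mathbb{C}(\lambda)$ is a reasonable clarification that the paper leaves implicit.
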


\begin{proof}
   The adjacency matrix for $D_1\svj D_2$ can be written in the block form
\[
A(D_1\svj D_2) = \begin{bmatrix}
    O_{n_1} & B_{\text{out}}(D_1) & J_{n_1, n_2} \\
    B_{\text{in}}(D_1)^T & O_{m_1} & O_{m_1, n_2} \\
    J_{n_2, n_1} & O_{n_2, m_1} & A(D_2)
\end{bmatrix}.
\]
The result now follows by \cref{cor:3by3}(1) 
and noting $A(D_1)=B_{\text{out}}(D_1)B_{\text{in}}(D_1)^T$
by \cref{lem:bb}.
\end{proof}

\begin{corollary}\label[corollary]{cor:regular}
Let $D_1$ be an $r$-out-regular digraph on $n_1$ vertices and $D_2$ be an $s$-out-regular digraph on $n_2$ vertices. Then
\[
f_{A(D_1\svj D_2)}(\lambda) = \lambda^{n_1(r-1)} f_{A(D_1)}(\lambda^2) f_{A(D_2)}(\lambda) \left[1 - \frac{\lambda n_1 n_2}{(\lambda^2 - r)(\lambda - s)}\right].
\]
\end{corollary}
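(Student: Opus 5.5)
The plan is to specialize \cref{thm:sub-vertex} to the out-regular case. Two things need to be computed: the power of $\lambda$ in front, and the two coronals.

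First, the prefactor. Since $D_1$ is $r$-out-regular on $n_1$ vertices, it has $m_1=rn_1$ arcs, because every arc is counted once at its tail. Hence $\lambda^{m_1-n_1}=\lambda^{n_1(r-1)}$. The factors $f_{A(D_1)}(\lambda^2)$ and $f_{A(D_2)}(\lambda)$ carry over from \cref{thm:sub-vertex} unchanged.

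Second, the coronals. Both $A(D_1)$ and $A(D_2)$ have constant row sums $r$ and $s$, respectively. So by \cref{coronal_cal} (equivalently \cref{prop:regular}),
\[
\chi_{A(D_1)}(\mu)=\frac{n_1}{\mu-r},\qquad \chi_{A(D_2)}(\lambda)=\frac{n_2}{\lambda-s}.
\]
Evaluating the first at $\mu=\lambda^2$ and substituting both into the bracket $1-\lambda\,\chi_{A(D_1)}(\lambda^2)\,\chi_{A(D_2)}(\lambda)$ gives
\[
1-\frac{\lambda n_1n_2}{(\lambda^2-r)(\lambda-s)},
\]
which is the claimed formula.

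The only subtle point is the edge case $r=0$ (or $s=0$). If $r=0$ then $m_1=0$, but \cref{coronal_cal} still applies since the row sums are then the constant $0$; the formula reads $\lambda^{-n_1}f_{A(D_1)}(\lambda^2)=\lambda^{n_1}$, which is consistent. In every case, the identity is an equality of rational functions in $\lambda$, valid wherever the coronals are defined, and it extends to a polynomial identity by clearing denominators. I therefore expect no real obstacle beyond the bookkeeping $m_1=rn_1$.
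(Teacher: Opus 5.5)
Your proposal is correct and is the same argument as the paper's proof. Like the paper, you substitute $m_1=rn_1$ and the constant-row-sum coronals $\chi_{A(D_1)}(\lambda^2)=n_1/(\lambda^2-r)$ and $\chi_{A(D_2)}(\lambda)=n_2/(\lambda-s)$ from \cref{coronal_cal} into \cref{thm:sub-vertex}.
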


\begin{proof}
Since $D_1$ is $r$-out-regular, $D_1$ has $m_1 = n_1 r$ arcs and $\chi_{A(D_1)}(\lambda^2) = \frac{n_1}{\lambda^2 - r}$. Similarly, since $D_2$ is $s$-out-regular, $\chi_{A(D_2)}(\lambda) = \frac{n_2}{\lambda - s}$. Substituting these into \cref{thm:sub-vertex} gives
the result. 
\end{proof}

In the case that $D_1$ is out-regular, we can obtain simple formulas for the Laplacian and signless Laplacian characteristic polynomial for $D_1\svj D_2$. 

\begin{theorem}\label[theorem]{laplacian-sub-vjoin}
Let $D_1$ and $D_2$ be two digraphs on $n_1$ and $n_2$ vertices, and $m_1$ and $m_2$ arcs, respectively.
Suppose that $D_1$ is $r$-out-regular.
Then 
\begin{enumerate}
    \item $\displaystyle f_{L(D_1\svj D_2)}(\lambda)
=\\
(-1)^{n_1}(\lambda-1)^{n_1(r-1)}\,
f_{L(D_2)}(\lambda-n_1)\,
f_{L(D_1)}\bigl(r-(\lambda-r-n_2)(\lambda-1)\bigr)
\left[
  1-\frac{(\lambda-1)n_1n_2}{(\lambda-n_1)((\lambda-n_2-r)(\lambda-1)-r)}
\right]$, 
    \item $\displaystyle f_{Q(D_1\svj  D_2)
}(\lambda)=(\lambda-1)^{n_1(r-1)}\,
f_{Q(D_2)}(\lambda-n_1)\,
f_{Q(D_1)}\bigl(r+(\lambda-r-n_2)(\lambda-1)\bigr)
\left[
  1-\frac{(\lambda-1)n_1\,\chi_{Q(D_2)}(\lambda-n_1)}{(\lambda-r-n_2)(\lambda-1)-r}
\right].$
\end{enumerate}    
In particular, the $L$-spectrum of $D_1\svj D_2$ is completely determined by the $L$-spectra of $D_1$ and $D_2$,
and the $Q$-spectrum of $D_1\svj D_2$ is completely determined by the $Q$-spectra of $D_1$ and $D_2$, and the $Q$-coronal of $D_2$.
\end{theorem}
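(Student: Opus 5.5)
The plan is to write the Laplacian and signless Laplacian matrices of $D_1\svj D_2$ in the same $3\times 3$ block form used in the proof of \cref{thm:sub-vertex}, and then apply \cref{cor:3by3}(1) directly. The only preliminary work is computing out-degrees in $D=D_1\svj D_2$, with respect to the partition $(V(D_1),I(D_1),V(D_2))$.
\begin{itemize}
\item A vertex of $V(D_1)$ has out-degree $r+n_2$: it has $r$ arcs into $I(D_1)$ and $n_2$ arcs into $V(D_2)$.
\item A vertex of $I(D_1)$ has out-degree $1$.
\item A vertex $w\in V(D_2)$ has out-degree $d^+_{D_2}(w)+n_1$.
\end{itemize}
Hence
\[
L(D)=\begin{bmatrix}
(r+n_2)I_{n_1} & -B_{\textnormal{out}}(D_1) & -J_{n_1,n_2}\\
-B_{\textnormal{in}}(D_1)^T & I_{m_1} & O_{m_1,n_2}\\
-J_{n_2,n_1} & O_{n_2,m_1} & n_1I_{n_2}+L(D_2)
\end{bmatrix}.
\]
The matrix $Q(D)$ is the same except that every off-diagonal block has a plus sign and $L(D_2)$ is replaced by $Q(D_2)$.

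\textbf{Laplacian.} We apply \cref{cor:3by3}(1) with $\alpha=r+n_2$, $\beta=1$, $\gamma=n_1$ and $F=L(D_2)$. We take $B=-B_{\textnormal{out}}(D_1)$ and $C=-B_{\textnormal{in}}(D_1)^T$, and use the minus sign on the $J$ blocks. By \cref{lem:bb}, $BC=A(D_1)$. Since $m_1=rn_1$, the exponent is $s-r=m_1-n_1=n_1(r-1)$. This gives
\[
f_{L(D)}(\lambda)=(\lambda-1)^{n_1(r-1)}f_{L(D_2)}(\lambda-n_1)\,f_{A(D_1)}(x)\bigl(1-(\lambda-1)\chi_{L(D_2)}(\lambda-n_1)\chi_{A(D_1)}(x)\bigr),
\]
where $x=(\lambda-r-n_2)(\lambda-1)$. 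Three substitutions then finish the proof.
\begin{itemize}
\item By \cref{prop:regular}, $f_{A(D_1)}(x)=(-1)^{n_1}f_{L(D_1)}(r-x)$.
\item By \cref{prop:regular}, $\chi_{A(D_1)}(x)=n_1/(x-r)$.
\item $L(D_2)$ has all row sums $0$, so \cref{coronal_cal} gives $\chi_{L(D_2)}(\lambda-n_1)=n_2/(\lambda-n_1)$.
\end{itemize}
Substituting these yields exactly formula (1). Because the $Q$-coronal of $D_2$ does not collapse in the same way, the $L$-spectrum depends only on the $L$-spectra of $D_1$ and $D_2$.

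\textbf{Signless Laplacian.} The same corollary applies with $B=B_{\textnormal{out}}(D_1)$, $C=B_{\textnormal{in}}(D_1)^T$, the plus sign on the $J$ blocks, and $F=Q(D_2)$. Again $BC=A(D_1)$. Two substitutions give formula (2).
\begin{itemize}
\item By \cref{prop:regular}, $f_{A(D_1)}(x)=f_{Q(D_1)}(x+r)$.
\item By \cref{prop:regular}, $\chi_{A(D_1)}(x)=n_1/(x-r)$.
\end{itemize}
Here $\chi_{Q(D_2)}(\lambda-n_1)$ stays unsimplified, since $D_2$ is not assumed out-regular. This is why the $Q$-coronal of $D_2$ appears in the final statement.

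\textbf{Main obstacle.} There is no real conceptual difficulty; the care is in the bookkeeping. First, \cref{cor:3by3}(1) allows arbitrary $B$ and $C$, so the signs $-B_{\textnormal{out}}$ and $-B_{\textnormal{in}}^T$ cancel in $BC$ and the $\pm J$ option is genuinely used. Second, the identities are equalities of rational functions whose left-hand sides are polynomials in $\lambda$, so they hold as polynomial identities. Finally, I would double-check the sign $(-1)^{n_1}$ and the argument $r-x$ coming from $f_{A}(\mu)=(-1)^{n}f_{L}(r-\mu)$.
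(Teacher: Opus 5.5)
Your proposal is correct and follows the paper's proof essentially step for step. Both use the same block forms of $L(D_1\svj D_2)$ and $Q(D_1\svj D_2)$, apply \cref{cor:3by3}(1) with $BC=A(D_1)$, and then substitute $\chi_{L(D_2)}(\mu)=n_2/\mu$, $\chi_{A(D_1)}(\mu)=n_1/(\mu-r)$, and $f_{A(D_1)}(\mu)=(-1)^{n_1}f_{L(D_1)}(r-\mu)=f_{Q(D_1)}(\mu+r)$.

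One small wording fix: the $L$-spectrum depends only on the $L$-spectra because the $L$-coronal of $D_2$ collapses to $n_2/\mu$, since $L(D_2)$ has zero row sums. It has nothing to do with the $Q$-coronal failing to collapse.
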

 
\begin{proof}
Since $D_1$ is $r$-out-regular, it has $m_1=n_1r$ arcs and $D_{\textnormal{out}}(D_1)=rI_{n_1}$.
Thus, the Laplacian matrix of $D_1\svj D_2$ can be written in the block form
\[
L(D_1\svj D_2)=
\begin{bmatrix}
(n_2+r)I_{n_1} & -B_{\textnormal{out}}(D_1) & -J_{n_1,n_2}\\
-B_{\textnormal{in}}(D_1)^{T} & I_{m_1} & O_{m_1,n_2}\\
-J_{n_2,n_1} & O_{n_2,m_1} & n_1I_{n_2}+L(D_2)
\end{bmatrix}.
\]
The result now follows by \cref{cor:3by3}(1) and noting $A(D_1)=B_{\text{out}}(D_1)B_{\text{in}}(D_1)^T$
by \cref{lem:bb} and $\chi_{L(D_2)}(\lambda)=n_2/\lambda$
by \cref{coronal_cal}. 
Finally, $L(D_1)=rI_{n_1}-A(D_1)$ implies $f_{A(D_1)}(\lambda)=(-1)^{n_1}f_{L(D_1)}(r-\lambda)$. 

For the second equation, the signless Laplacian matrix of $D_1\svj D_2$ can be written in the block form
\[
Q(D_1\svj D_2)=
\begin{bmatrix}
(n_2+r)I_{n_1} & B_{\textnormal{out}}(D_1) & J_{n_1,n_2}\\
B_{\textnormal{in}}(D_1)^{T} & I_{m_1} & O_{m_1,n_2}\\
J_{n_2,n_1} & O_{n_2,m_1} & n_1I_{n_2}+Q(D_2)
\end{bmatrix}.
\]
The result now follows by \cref{cor:3by3}(1) and noting $A(D_1)=B_{\text{out}}(D_1)B_{\text{in}}(D_1)^T$
by \cref{lem:bb}. 
Finally, $Q(D_1)=rI_{n_1}+A(D_1)$ implies $f_{A(D_1)}(\lambda)=f_{Q(D_1)}(\lambda+r)$.  
\end{proof}

\subsection{The subdivision-arc join of two digraphs}

Let $D_1$ and $D_2$ be digraphs.  
The \emph{(symmetric) subdivision-arc join} $D_1 \saj D_2$ is obtained by taking $\mathcal{S}(D_1)$ and $D_2$, and adding both arcs $v w$ and $w v$ for every new vertex $v \in I(D_1)$ and every vertex $w \in V(D_2)$  (see \cref{fig:P2subvjoin}(c)).  
One can similarly define the forward (only arcs $v w$) or backward (only arcs $w v$) subdivision-arc joins, however, these are not strongly connected and are therefore not considered here.

For graphs $G_1$ and $G_2$, the subdivision-edge join $G_1\saj G_2$ is connected if and only if $G_1$ has no isolated vertices, i.e., $\delta(G_1)\geq 1$.
An analogous result holds for digraphs.
For a digraph $D$, we let $\delta^+(D)$ (resp. $\delta^-(D)$) denote the minimum out-degree (resp. in-degree) of $D$.

\begin{proposition}
Let $D_1$ and $D_2$ be digraphs.
Then $D_1\saj D_2$ is strongly connected if and only if 
$\delta^+(D_1)\geq 1$
and $\delta^-(D_1)\geq 1$.
\end{proposition}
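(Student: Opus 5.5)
We argue the two directions separately, writing $D=D_1\saj D_2$ and $V(D)=V(D_1)\cup I(D_1)\cup V(D_2)$. The key structural fact is that in $\mathcal{S}(D_1)$ each original vertex $v\in V(D_1)$ has out-degree $d^+_{D_1}(v)$ and in-degree $d^-_{D_1}(v)$, and all its out-neighbours and in-neighbours lie in $I(D_1)$. The join adds no arcs at vertices of $V(D_1)$, so these degrees are unchanged in $D$.

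For necessity, suppose some $v\in V(D_1)$ has $d^+_{D_1}(v)=0$. Then $v$ has out-degree $0$ in $D$, and since $D$ has at least one other vertex ($D_2$ is nonempty), $v$ cannot reach it. Hence $D$ is not strongly connected. Symmetrically, if $d^-_{D_1}(v)=0$, then $v$ has in-degree $0$ in $D$ and is unreachable from other vertices. Therefore strong connectivity forces $\delta^+(D_1)\ge 1$ and $\delta^-(D_1)\ge 1$.

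For sufficiency, assume both minimum degrees are at least $1$. Then $D_1$ has at least one arc, so $I(D_1)\neq\emptyset$. Every vertex of $I(D_1)$ is joined by $2$-cycles to every vertex of $V(D_2)$, so $D$ contains a symmetric complete bipartite subdigraph on $(I(D_1),V(D_2))$. Consequently any two vertices of $I(D_1)\cup V(D_2)$ reach each other, each via a path of length at most $2$ through the other side. Now take $v\in V(D_1)$. Since $d^+_{D_1}(v)\ge1$, there is an arc $vw$ of $D_1$, and in $D$ we have the arc $v\to x_{vw}$ with $x_{vw}\in I(D_1)$, so $v$ reaches $I(D_1)$. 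Since $d^-_{D_1}(v)\ge1$, there is an arc $uv$ of $D_1$, giving the arc $x_{uv}\to v$, so $v$ is reachable from $I(D_1)$. Combining these facts, every vertex reaches, and is reachable from, the strongly connected set $I(D_1)\cup V(D_2)$, so $D$ is strongly connected.

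No step is a real obstacle. The only points needing care are checking that the join adds no arcs at $V(D_1)$, which is what makes the degree conditions necessary, and using nonemptiness of $D_2$ and of $I(D_1)$.
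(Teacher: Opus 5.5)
Your proof is correct and follows essentially the same route as the paper. For necessity, both note that the join adds no arcs at vertices of $V(D_1)$, so a vertex of out- or in-degree $0$ in $D_1$ keeps that degree in $D$. For sufficiency, both show that every vertex of $V(D_1)$ reaches, and is reached from, the strongly connected set $I(D_1)\cup V(D_2)$. Your version also states explicitly that $D$ has a vertex other than $v$ because $D_2$ is nonempty, a point the paper uses only implicitly.
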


\begin{proof}
Let $D=D_1\saj D_2$. If there exists $v\in V(D_1)$ with
$d^+_{D_1}(v)=0$, then $d^+_D(v)=0$, since vertices of $V(D_1)$
are not joined directly to vertices of $D_2$. Similarly, if
$d^-_{D_1}(v)=0$, then $d^-_D(v)=0$. Hence $D$ cannot be strongly
connected unless $\delta^+(D_1)\ge 1$ and $\delta^-(D_1)\ge 1$.

Conversely, suppose that $\delta^+(D_1)\ge 1$ and
$\delta^-(D_1)\ge 1$. Let $H=I(D_1)\cup V(D_2)$.
Every vertex of $I(D_1)$ is joined by a $2$-cycle to every vertex of $V(D_2)$. Since $D_2$ is nonempty and $\delta^+(D_1)\ge 1$, the set
$H$ is nonempty and induces a strongly connected subdigraph of $D$.

Now let $x\in V(D_1)$. Since $d^+_{D_1}(x)\ge 1$, there is an arc
$xy\in E(D_1)$. Thus, in $\mathcal S(D_1)$, $x$ reaches the
subdivision vertex corresponding to $xy$, which belongs to $H$. Hence
$x$ reaches $H$. Similarly, since $d^-_{D_1}(x)\ge 1$, there is an
arc $zx\in E(D_1)$. The subdivision vertex corresponding to $zx$
belongs to $H$ and reaches $x$. Hence $H$ reaches $x$.

Therefore every vertex of $V(D_1)$ both reaches and is reachable from the
strongly connected subdigraph induced by $H$. It follows that $D$ is
strongly connected.
\end{proof}

\begin{theorem}\label{thm:subdiv_A}
Let $D_1$ and $D_2$ be two digraphs on $n_1$ and $n_2$ vertices, and $m_1$ and $m_2$ arcs, respectively. 
Then 
\[
f_{A(D_1\saj D_2)}(\lambda)
=\lambda^{m_1-n_1}\, 
f_{A(D_1)}(\lambda^2)\, 
f_{A(D_2)}(\lambda)\,\left[1-\lambda\,
\left(\lambda^2\,\chi_{A(D_1)}(\lambda^2)-n_1\right)\,
\chi_{A(D_2)}(\lambda)\right].
\]
In particular, the $A$-spectrum of $D_1\saj D_2$ is completely determined by the $A$-spectra of $D_1$ and $D_2$, and the $A$-coronals of $D_1$ and $D_2$.
\end{theorem}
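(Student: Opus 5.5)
We mirror the proof of \cref{thm:sub-vertex}, now with the second factor attached to the subdivision vertices. With respect to the vertex partition $(V(D_1),I(D_1),V(D_2))$, the adjacency matrix is
\[
A(D_1\saj D_2)=\begin{bmatrix}
O_{n_1} & B_{\textnormal{out}}(D_1) & O_{n_1,n_2}\\
B_{\textnormal{in}}(D_1)^T & O_{m_1} & J_{m_1,n_2}\\
O_{n_2,n_1} & J_{n_2,m_1} & A(D_2)
\end{bmatrix}.
\]
This has the form of $M_2$ in \cref{cor:3by3}, with $r=n_1$, $s=m_1$, $t=n_2$, $\alpha=\beta=\gamma=0$, $B=B_{\textnormal{out}}(D_1)$, $C=B_{\textnormal{in}}(D_1)^T$ and $F=A(D_2)$. \Cref{cor:3by3}(2) then gives
\[
f_{A(D_1\saj D_2)}(\lambda)=\lambda^{n_1-m_1}\,f_{A(D_2)}(\lambda)\,f_{CB}(\lambda^2)\Bigl(1-\lambda\,\chi_{A(D_2)}(\lambda)\,\chi_{CB}(\lambda^2)\Bigr).
\]
By \cref{lem:bb}, $CB=B_{\textnormal{in}}(D_1)^TB_{\textnormal{out}}(D_1)=A(\mathcal{L}(D_1))$.

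Next we rewrite the line-digraph quantities in terms of $D_1$. For the characteristic polynomial, \cref{lem:bb} gives $f_{A(\mathcal{L}(D_1))}(\mu)=\mu^{m_1-n_1}f_{A(D_1)}(\mu)$. Substituting $\mu=\lambda^2$ contributes a factor $\lambda^{2(m_1-n_1)}$, which combines with $\lambda^{n_1-m_1}$ to give the prefactor $\lambda^{m_1-n_1}f_{A(D_1)}(\lambda^2)$.

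For the coronal, \cref{linecor} gives $\chi_{A(\mathcal{L}(D_1))}(\mu)=\mu\,\chi_{A(D_1)}(\mu)-n_1$. Setting $\mu=\lambda^2$ yields exactly the factor $\lambda^2\chi_{A(D_1)}(\lambda^2)-n_1$ appearing in the statement. Combining these two substitutions gives the claimed formula.

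The \emph{in particular} clause follows immediately, since only $f_{A(D_1)}$, $f_{A(D_2)}$, $\chi_{A(D_1)}$ and $\chi_{A(D_2)}$ appear in the formula.

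The only delicate point is checking that the block structure matches $M_2$ rather than $M_1$. Here the $J$-blocks connect $D_2$ to the second block, $I(D_1)$, so the product $CB$ of order $m_1$ arises instead of $BC$. This is what forces the passage through the line digraph and \cref{linecor}. The identities hold as rational functions in $\lambda$, so no further care about poles is needed.
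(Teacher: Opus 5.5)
Your proposal is correct and takes the same approach as the paper. You identify the block matrix as the $M_2$ case of \cref{cor:3by3}, use \cref{lem:bb} to get $CB=A(\mathcal{L}(D_1))$ and $f_{A(\mathcal{L}(D_1))}(\lambda^2)=\lambda^{2(m_1-n_1)}f_{A(D_1)}(\lambda^2)$, and use \cref{linecor} to rewrite $\chi_{A(\mathcal{L}(D_1))}(\lambda^2)$ as $\lambda^2\chi_{A(D_1)}(\lambda^2)-n_1$, exactly as the paper does.
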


\begin{proof}
The adjacency matrix of $D_1\saj D_2$ can be written in the block form
\[
A(D_1\saj D_2)=\left[\begin{array}{ccc}
O_{n_1} & B_{\textnormal{out}}(D_1) & O_{n_1,n_2} \\
B_{\textnormal{in}}(D_1)^T & O_{m_1} &  J_{m_1,n_2} \\
O_{n_2,n_1} & J_{n_2,m_1} & A(D_2)
\end{array}\right].
\]
The result now follows by 
\cref{cor:3by3}(2), along with $A(\mathcal{L}(D_1))=B_{\text{in}}(D_1)^TB_{\text{out}}(D_1)$,
and $f_{A(\mathcal{L}(D_1))}(\lambda^2)=\lambda^{2m_1-2n_1}f_{A(D_1)}(\lambda^2)$
by \cref{lem:bb}, and \cref{linecor}.
\end{proof}

\begin{corollary}\label[corollary]{cor:regular_subdiv}
Let $D_1$ be an $r$-out-regular digraph on $n_1$ vertices and $D_2$ be an $s$-out-regular digraph on $n_2$ vertices. 
Then 
\[
f_{A(D_1\saj D_2)}(\lambda) 
= \lambda^{n_1(r-1)}\,
f_{A(D_1)}(\lambda^2)\,
f_{A(D_2)}(\lambda)
\left[1 - \frac{\lambda\, r\, n_1 n_2}{(\lambda^2 - r)(\lambda - s)}\right].
\]
\end{corollary}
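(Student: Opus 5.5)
The corollary is a direct specialization of \cref{thm:subdiv_A}, obtained by substituting the closed-form coronals available for out-regular digraphs. The plan has three short steps.

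\textbf{Step 1 (arc count and coronals).} Since $D_1$ is $r$-out-regular, it has $m_1=n_1r$ arcs, so the prefactor $\lambda^{m_1-n_1}$ in \cref{thm:subdiv_A} becomes $\lambda^{n_1(r-1)}$. By \cref{coronal_cal} (equivalently \cref{prop:regular}), the row sums of $A(D_1)$ all equal $r$ and those of $A(D_2)$ all equal $s$, so
\[
\chi_{A(D_1)}(\lambda^2)=\frac{n_1}{\lambda^2-r},\qquad \chi_{A(D_2)}(\lambda)=\frac{n_2}{\lambda-s}.
\]

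\textbf{Step 2 (simplify the bracket).} The only computation is
\[
\lambda^2\chi_{A(D_1)}(\lambda^2)-n_1=\frac{\lambda^2 n_1-n_1(\lambda^2-r)}{\lambda^2-r}=\frac{r n_1}{\lambda^2-r}.
\]
Multiplying by $\lambda\,\chi_{A(D_2)}(\lambda)=\lambda n_2/(\lambda-s)$ gives
\[
\frac{\lambda\, r\, n_1 n_2}{(\lambda^2-r)(\lambda-s)},
\]
which is exactly the term subtracted from $1$ in the claimed formula.

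\textbf{Step 3 (assemble).} Substituting Steps 1 and 2 into \cref{thm:subdiv_A} yields the stated expression. All identities are between rational functions in $\lambda$, so they are valid on the common domain where the denominators are nonzero. The product is a polynomial, and it therefore agrees with $f_{A(D_1\saj D_2)}(\lambda)$ everywhere.

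There is no substantive obstacle here. The only point to check is that the constant row-sum hypothesis of \cref{coronal_cal} holds for both adjacency matrices, and out-regularity guarantees this. One degenerate case deserves a remark: if $r=0$, the formula gives bracket $1$ and factor $\lambda^{-n_1}$. This is still consistent, because $m_1=0$ and $f_{A(D_1)}(\lambda^2)=\lambda^{2n_1}$.
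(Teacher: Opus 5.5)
Your proposal is correct and follows essentially the paper's argument: specialize \cref{thm:subdiv_A} using $m_1=n_1r$ and the constant-row-sum coronals from \cref{coronal_cal}. The only cosmetic difference is in how the bracket is evaluated. The paper gets $n_1r/(\lambda^2-r)$ as $\chi_{A(\mathcal{L}(D_1))}(\lambda^2)$, noting that $\mathcal{L}(D_1)$ is itself $r$-out-regular on $m_1$ vertices and implicitly using \cref{linecor}; you instead simplify $\lambda^2\chi_{A(D_1)}(\lambda^2)-n_1$ directly, which matches the theorem exactly as stated.
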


\begin{proof}
Since $D_1$ is $r$-out-regular, it has $m_1 = n_1 r$ arcs, and furthermore, its line digraph $\mathcal{L}(D_1)$ is also $r$-out-regular.
Thus, $\chi_{A(\mathcal{L}(D_1))}(\lambda^2) = \frac{m_1}{\lambda^2 - r} = \frac{n_1 r}{\lambda^2 - r}$. Since $D_2$ is $s$-out-regular, $\chi_{A(D_2)}(\lambda) = \frac{n_2}{\lambda - s}$. Substituting these into Theorem \ref{thm:subdiv_A} yields the result.
\end{proof}

In the case that $D_1$ is out-regular, we can obtain simple formulas for the Laplacian and signless Laplacian characteristic polynomial for $D_1\saj D_2$.

\begin{theorem}
Let $D_1$ and $D_2$ be two digraphs on $n_1$ and $n_2$ vertices, and $m_1$ and $m_2$ arcs, respectively.
Suppose that $D_1$ is $r$-out-regular.
Then 
\begin{enumerate}
    \item $ f_{L(D_1\saj D_2)}=$\\
$
(-1)^{n_1}
(\lambda-1-n_2)^{n_1(r-1)}\,
f_{L(D_2)}(\lambda-n_1r)\,
f_{L(D_1)}\!\left(r-(\lambda-r)(\lambda-1-n_2)\right)\,
\left[
1-\frac{r(\lambda-r)n_1n_2}
{(\lambda-n_1r)\left((\lambda-r)(\lambda-1-n_2)-r\right)}
\right].
$
    \item $\displaystyle f_{Q(D_1\saj D_2)}(\lambda)=$\\
$\displaystyle 
(\lambda-1-n_2)^{n_1(r-1)}
f_{Q(D_2)}(\lambda-n_1r)\,
f_{Q(D_1)}\!\left(r+(\lambda-r)(\lambda-1-n_2)\right)\,
\left[
1-\frac{n_1r(\lambda-r)\,\chi_{Q(D_2)}(\lambda-n_1r)}
{(\lambda-r)(\lambda-1-n_2)-r}
\right].
$
\end{enumerate}    
In particular, the $L$-spectrum of $D_1\saj D_2$ is completely determined by the $L$-spectra of $D_1$ and $D_2$,
and the $Q$-spectrum of $D_1\saj D_2$ is completely determined by the $Q$-spectra of $D_1$ and $D_2$, and the $Q$-coronal of $D_2$.
\end{theorem}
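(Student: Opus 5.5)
We follow the pattern of the proof of \cref{laplacian-sub-vjoin}, now using \cref{cor:3by3}(2) instead of (1). First we write down the degree data. Since $D_1$ is $r$-out-regular, $m_1=n_1r$. In $D_1\saj D_2$:
\begin{itemize}
\item every vertex of $V(D_1)$ has out-degree $r$, because its only out-neighbours are subdivision vertices;
\item every vertex of $I(D_1)$ has out-degree $1+n_2$;
\item every vertex $w\in V(D_2)$ has out-degree $d^+_{D_2}(w)+m_1=d^+_{D_2}(w)+n_1r$.
\end{itemize}
With respect to the partition $(V(D_1),I(D_1),V(D_2))$ we therefore get
\[
L(D_1\saj D_2)=
\begin{bmatrix}
rI_{n_1} & -B_{\textnormal{out}}(D_1) & O_{n_1,n_2}\\
-B_{\textnormal{in}}(D_1)^{T} & (1+n_2)I_{m_1} & -J_{m_1,n_2}\\
O_{n_2,n_1} & -J_{n_2,m_1} & n_1rI_{n_2}+L(D_2)
\end{bmatrix}.
\]
The signless Laplacian $Q(D_1\saj D_2)$ has the same block form with all off-diagonal signs positive and $Q(D_2)$ in place of $L(D_2)$.

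Next we apply \cref{cor:3by3}(2). The parameters are $\alpha=r$, $\beta=1+n_2$, $\gamma=n_1r$, $F=L(D_2)$ (resp. $Q(D_2)$), $B=\mp B_{\textnormal{out}}(D_1)$ and $C=\mp B_{\textnormal{in}}(D_1)^T$. Then $CB=B_{\textnormal{in}}(D_1)^TB_{\textnormal{out}}(D_1)=A(\mathcal{L}(D_1))$ by \cref{lem:bb}, and the signs cancel. Writing $x=(\lambda-r)(\lambda-1-n_2)$, this gives
\[
f_{L(D_1\saj D_2)}(\lambda)=(\lambda-r)^{n_1-m_1}\,f_{L(D_2)}(\lambda-n_1r)\,f_{A(\mathcal{L}(D_1))}(x)\Bigl(1-(\lambda-r)\,\chi_{L(D_2)}(\lambda-n_1r)\,\chi_{A(\mathcal{L}(D_1))}(x)\Bigr),
\]
and the analogous formula for $Q$.

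We then simplify each factor using earlier results.
\begin{itemize}
\item By \cref{lem:bb}, $f_{A(\mathcal{L}(D_1))}(x)=x^{m_1-n_1}f_{A(D_1)}(x)$. Combined with $(\lambda-r)^{n_1-m_1}$, this leaves $(\lambda-1-n_2)^{m_1-n_1}=(\lambda-1-n_2)^{n_1(r-1)}$.
\item Since $L(D_1)=rI-A(D_1)$ and $Q(D_1)=rI+A(D_1)$, we have $f_{A(D_1)}(x)=(-1)^{n_1}f_{L(D_1)}(r-x)$ and $f_{A(D_1)}(x)=f_{Q(D_1)}(r+x)$.
\item For the coronals, $\chi_{L(D_2)}(\mu)=n_2/\mu$ by \cref{coronal_cal}, since $L(D_2)$ has zero row sums.
\item $\mathcal{L}(D_1)$ is $r$-out-regular on $m_1=n_1r$ vertices, so $\chi_{A(\mathcal{L}(D_1))}(x)=n_1r/(x-r)$ by \cref{prop:regular}. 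Equivalently, by \cref{linecor}, $\chi_{A(\mathcal{L}(D_1))}(x)=x\chi_{A(D_1)}(x)-n_1$.
\end{itemize}
Substituting these produces the bracket $1-\dfrac{r(\lambda-r)n_1n_2}{(\lambda-n_1r)(x-r)}$ in part (1), and $1-\dfrac{n_1r(\lambda-r)\,\chi_{Q(D_2)}(\lambda-n_1r)}{x-r}$ in part (2). For $Q(D_2)$ we keep the coronal, since its row sums are not constant in general.

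The main obstacle is bookkeeping rather than ideas. We must track the exponents $n_1-m_1$ and $m_1-n_1$, together with the signs introduced by $B,C$ and by $(-1)^{n_1}$. We must also justify that the rational identities hold as polynomial identities. For this we argue over $\mathbb{C}(\lambda)$, where all the relevant inverses exist, so the resulting identities hold for all $\lambda$. The "in particular" statements are then read off directly from the final formulas.
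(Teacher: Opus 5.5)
Your proposal is correct and follows the paper's own proof essentially step for step. You use the same block forms for $L(D_1\saj D_2)$ and $Q(D_1\saj D_2)$, apply \cref{cor:3by3}(2) with $CB=B_{\textnormal{in}}(D_1)^TB_{\textnormal{out}}(D_1)=A(\mathcal{L}(D_1))$, and simplify via \cref{lem:bb}/\cref{prop:regular} and $\chi_{L(D_2)}(\mu)=n_2/\mu$; your explicit checks that $(\lambda-r)^{n_1-m_1}x^{m_1-n_1}=(\lambda-1-n_2)^{n_1(r-1)}$ and $\chi_{A(\mathcal{L}(D_1))}(x)=n_1r/(x-r)$ supply exactly the bookkeeping the paper leaves implicit.
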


\begin{proof}
Since $D_1$ is $r$-out-regular, it has $m_1 = n_1 r$ arcs and $D_{\textnormal{out}}(D_1)=rI_{n_1}$.
Thus, the Laplacian matrix of $D_1\saj D_2$ can be written in the block form
\[
L(D_1\saj D_2)=\left[\begin{array}{ccc}
rI_{n_1} & -B_{\textnormal{out}}(D_1) & O_{n_1,n_2} \\
-B_{\textnormal{in}}(D_1)^T & (1+n_2)I_{m_1} &  -J_{m_1,n_2} \\
O_{n_2,n_1} & -J_{n_2,m_1} & m_1I_{n_2}+L(D_2)
\end{array}\right],
\] 
and the signless Laplacian of $D_1\saj D_2$ can be written in the block form
\[
Q(D_1\saj D_2)=\left[\begin{array}{ccc}
rI_{n_1} & B_{\textnormal{out}}(D_1) & O_{n_1,n_2} \\
B_{\textnormal{in}}(D_1)^T & (1+n_2)I_{m_1} &  J_{m_1,n_2} \\
O_{n_2,n_1} & J_{n_2,m_1} & m_1I_{n_2}+Q(D_2)
\end{array}\right].
\]
Since $B_{\textnormal{in}}(D_1)^TB_{\textnormal{out}}(D_1)
=A(\mathcal L(D_1))$, the result now follows from \cref{cor:3by3}(2), using 
$\chi_{L(D_2)}(\lambda)=n_2/\lambda$ and \cref{prop:regular} to rewrite
$f_{A(\mathcal L(D_1))}$ in terms of $f_{L(D_1)}$ and $f_{Q(D_1)}$.
\end{proof}

\section{Subdivision-corona products}\label[section]{sec:subcor}
Let $G_1$ and $G_2$ be graphs where $G_1$ has $n_1$ vertices and $m_1$ edges.
The \emph{subdivision-vertex corona} $G_1\odot\, G_2$ is the graph obtained from $\mathcal{S}(G_1)$ and $n_1$ copies of $G_2$, all vertex-disjoint, by joining the $i$th vertex of $V(G_1)$ to every vertex in the $i$th copy of $G_2$, whereas the \emph{subdivision-edge corona} $G_1\ominus\, G_2$ is the graph obtained from $\mathcal{S}(G_1)$ and $m_1$ copies of $G_2$, all vertex-disjoint, by joining the $i$th vertex of $I(G_1)$ to every vertex in the $i$th copy of $G_2$.
The notation $G_2^{(v)}$ is used to denote the copy of $G_2$ that is joined to vertex $v$.
These two graph operations were introduced in \cite{PengliMiao2013} where their adjacency, Laplacian and signless Laplacian spectrum were computed when $G_1$ is regular and $G_2$ is arbitrary.
In \cite{LIU-subdivision}, the subdivision-vertex neighbourhood corona and subdivision-edge neighbourhood corona were defined and studied.
 
Before extending the subdivision-vertex and subdivision-edge coronas to the directed setting, we first establish a result for graphs that describes the $A$-spectrum of $G_1\odot\, G_2$ (resp. $G_1\ominus\, G_2$) when both factors $G_1$ and $G_2$ are arbitrary. 

\begin{theorem}
Let $G_1$ and $G_2$ be simple graphs on $n_1$ and $n_2$ vertices, and $m_1$ and $m_2$ edges, respectively. 
Then
\[
f_{A(G_1\odot\, G_2)}(\lambda)
=\left(\lambda-\chi_{A(G_2)}(\lambda)\right)^{n_1-m_1}\,\bigl[f_{A(G_2)}(\lambda)\bigr]^{n_1}\,f_{A(\mathcal{L}(G_1))}\left(
\lambda^2-\lambda\,\chi_{A(G_2)}(\lambda)-2
\right),
\]
and
\[
f_{A(G_1\ominus\, G_2)}(\lambda) 
=\lambda^{n_1-m_1}\,\bigl[f_{A(G_2)}(\lambda)\bigr]^{m_1}\,f_{A(\mathcal{L}(G_1))}\left(
\lambda^2-\lambda\,\chi_{A(G_2)}(\lambda)-2
\right),
\]
where $\mathcal{L}(G_1)$ is the line graph of $G_1$.
In particular, 
the $A$-spectrum of $G_1\odot\, G_2$ (resp. $G_1\ominus\, G_2$) is completely determined by the $A$-spectra of $\mathcal{L}(G_1)$ and $G_2$, and the $A$-coronal of $G_2$.
\end{theorem}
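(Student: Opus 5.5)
Both formulas will follow from \cref{cor:3by3}(3) and (4) applied to suitable block forms, in the same way the join theorems above used parts (1) and (2). Order the vertices of $G_1\odot\,G_2$ as $(V(G_1), I(G_1), V(G_2^{(v_1)}),\ldots,V(G_2^{(v_{n_1})}))$. Then
\[
A(G_1\odot\,G_2)=\begin{bmatrix}
O_{n_1} & B(G_1) & I_{n_1}\otimes J_{1,n_2}\\
B(G_1)^T & O_{m_1} & O_{m_1,n_1n_2}\\
I_{n_1}\otimes J_{n_2,1} & O_{n_1n_2,m_1} & I_{n_1}\otimes A(G_2)
\end{bmatrix},
\]
which is $M_3$ with $r=n_1$, $s=m_1$, $t=n_2$, $\alpha=\beta=\gamma=0$, $B=B(G_1)$, $C=B(G_1)^T$ and $F=A(G_2)$. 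Ordering $G_1\ominus\,G_2$ as $(V(G_1),I(G_1),$ copies attached to $I(G_1))$ gives exactly $M_4$ with the same parameters.

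For $G_1\odot\,G_2$, \cref{cor:3by3}(3) yields
\[
f_{A(G_1\odot\,G_2)}(\lambda)=\lambda^{m_1-n_1}\,[f_{A(G_2)}(\lambda)]^{n_1}\,f_{Q(G_1)}\bigl(\lambda(\lambda-\chi_{A(G_2)}(\lambda))\bigr),
\]
since $B(G_1)B(G_1)^T=Q(G_1)$. Setting $\mu=\lambda(\lambda-\chi)$ with $\chi=\chi_{A(G_2)}(\lambda)$, I would use \cref{lem:bb}-type reasoning for graphs, namely $\mu^{m}f_{BB^T}(\mu)=\mu^{n}f_{B^TB}(\mu)$ together with $B^TB=A(\mathcal L(G_1))+2I_{m_1}$, to get
\[
f_{Q(G_1)}(\mu)=\mu^{n_1-m_1}f_{A(\mathcal L(G_1))}(\mu-2).
\]
The powers combine as $\lambda^{m_1-n_1}\mu^{n_1-m_1}=(\lambda-\chi)^{n_1-m_1}$, which gives the first formula.

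For $G_1\ominus\,G_2$, \cref{cor:3by3}(4) gives directly
\[
\lambda^{n_1-m_1}[f_{A(G_2)}(\lambda)]^{m_1}f_{B^TB}\bigl(\lambda(\lambda-\chi)\bigr),
\]
and $f_{B^TB}(x)=f_{A(\mathcal L(G_1))}(x-2)$ by \cref{cor_addI}. This yields the second formula with no further conversion.

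The only real care point is the bookkeeping in the first case. The identity $f_{BB^T}$ versus $f_{B^TB}$ holds as polynomials, with the exponent $n_1-m_1$ possibly negative, so the result should be read as an identity of rational functions in $\lambda$. I also need to check that the exponent produced by \cref{cor:3by3}(3), namely $(\lambda-\beta)^{s-r}=\lambda^{m_1-n_1}$, cancels correctly against $\mu^{n_1-m_1}$. The claim about the spectrum is then immediate from the formulas.
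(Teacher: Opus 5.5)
Your proposal is correct and matches the paper's proof essentially step for step. Both feed the same block forms into \cref{cor:3by3}(3) and (4), and both convert $f_{Q(G_1)}$ to $f_{A(\mathcal{L}(G_1))}$ in the first formula via the $BB^T$ versus $B^TB$ identity (the paper cites \eqref{eq:QG}, which is that identity), while the second formula follows directly from \cref{cor_addI}.
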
 

\begin{proof}
Let $B(G_1)$ denote the incidence matrix of $G_1$ and suppose $G_2$ has $n_2$ vertices.
The adjacency matrix of $G_1\odot\, G_2$ is permutation similar to 
\[
A(G_1\odot\, G_2)=
\begin{bmatrix}
O_{n_1} & B(G_1) & I_{n_1}\otimes J_{1,n_2}\\
B(G_1)^{T} & O_{m_1} & O_{m_1,n_1n_2}\\
I_{n_1}\otimes J_{n_2,1} & O_{n_1n_2,m_1} & I_{n_1}\otimes A(G_2)
\end{bmatrix}
\]
with respect to the vertex partition $\bigl(V(G_1),I(G_1),\bigsqcup_{v\in V(G_1)} V(G_2^{(v)})\bigr)$.
Since $Q(G_1)=B(G_1)B(G_1)^T$, the first equation follows by \cref{cor:3by3}(iii) and \eqref{eq:QG}. 
Note that \eqref{eq:QG} gives $f_{Q(G_1)}(\lambda)=\lambda^{n_1-m_1}f_{A(\mathcal{L}(G_1))}(\lambda-2)$ and hence 
\[
f_{Q(G_1)}(\lambda^2-\lambda\chi_{A(G_2)}(\lambda))=\lambda^{n_1-m_1}
\left(\lambda-\chi_{A(G_2)}(\lambda)\right)^{n_1-m_1}
f_{A(\mathcal{L}(G_1))}(\lambda^2-\lambda\chi_{A(G_2)}(\lambda)-2).
\]
Similarly, the adjacency matrix of $G_1\ominus\, G_2$ is permutation similar to 
\[
A(G_1\ominus\, G_2)=
\begin{bmatrix}
O_{n_1} & B(G_1) & O_{n_1,m_1n_2}\\
B(G_1)^{T} & O_{m_1} & I_{m_1}\otimes J_{1,n_2}\\
O_{m_1n_2,n_1} & I_{m_1}\otimes J_{n_2,1} & I_{m_1}\otimes A(G_2)
\end{bmatrix}
\]
with respect to the vertex partition 
$\bigl(V(G_1),I(G_1),\bigsqcup_{v\in I(G_1)} V(G_2^{(v)})\bigr)$).
Since $A(\mathcal{L}(G_1))=B(G_1)^TB(G_1)-2I_{m_1}$, the second equation follows by \cref{cor:3by3}(iv) and \cref{cor_addI}.
\end{proof}
 
\subsection{The subdivision-vertex corona of two digraphs}
Let $D_1$ and $D_2$ be digraphs where $D_1$ has $n_1$ vertices.
The \emph{(symmetric) subdivision-vertex corona} $D_1\odot\, D_2$
is obtained 
by taking $\mathcal{S}(D_1)$ and $n_1$ copies of $D_2$ (one copy for each vertex $v\in V(D_1)$, denoted by $D_2^{(v)}$), and for every $v\in V(D_1)$, adding both arcs $vw$ and $wv$ for all  $w\in V(D_2^{(v)})$ (see \cref{fig:ssvc}(a)).
It is clear that $D_1\odot\, D_2$ is strongly connected if and only if $D_1$ is strongly connected.

\begin{figure}[ht]
\centering
\begin{subfigure}{0.45\textwidth}
\centering
\scalebox{0.95}{
\begin{tikzpicture}[>=Stealth, thick, scale=1, 
vertex/.style={circle, fill, inner sep=2pt},
ivertex/.style={circle, draw, inner sep=2pt},
midarrow/.style={line width=1.3pt, postaction={decorate, decoration={markings, mark=at position 0.599 with {\arrow{Stealth[length=3mm,width=3mm]}}}}}, 
doublearrow/.style={line width=2.0pt, red, dash pattern=on 1pt off 1pt}]
\usetikzlibrary{decorations.markings,arrows.meta}
\draw[blue, dashed] (-0.4,-0.4) rectangle (3.4,0.4)
                    (0.3,1.6) rectangle (2.7,2.4);
\foreach \x/\cx in {0/0, 1.5/1.5, 3/3} {
  \coordinate (d2c) at (\cx,-1.5);
  \node[draw, ellipse, line width=1.3pt, minimum width=1.2cm, minimum height=0.7cm] at (d2c) {$D_2$};
  \coordinate (el) at ($(d2c)+(-0.52,0.20)$);
  \coordinate (er) at ($(d2c)+(0.52,0.20)$);
  \draw[doublearrow] (\x,0) -- (el);
  \draw[doublearrow] (\x,0) -- (er);
}
\node[left] at (-0.5,0) {$V(D_1)$};
\node[left] at (0.2,2) {$I(D_1)$};
\node[vertex] (v1) at (0,0) {};
\node[vertex] (v2) at (1.5,0) {};
\node[vertex] (v3) at (3,0) {};
\node[ivertex] (i1) at (0.75,2) {};
\node[ivertex] (i2) at (2.25,2) {};
\foreach \a/\b in {v1/i1, i1/v2, v2/i2, i2/v3}
  \draw[midarrow] (\a) -- (\b);
\end{tikzpicture}}
\caption{The subdivision-vertex corona $D_1\odot\,D_2$}
\end{subfigure}
\hfill
\begin{subfigure}{0.45\textwidth}
\centering
\scalebox{0.95}{
\begin{tikzpicture}[>=Stealth, thick, scale=1, vertex/.style={circle, fill, inner sep=2pt}, 
ivertex/.style={circle, draw, inner sep=2pt},
midarrow/.style={line width=1.3pt, postaction={decorate, decoration={markings, mark=at position 0.599 with {\arrow{Stealth[length=3mm,width=3mm]}}}}}, 
doublearrow/.style={line width=2.0pt, red, dash pattern=on 1pt off 1pt}]
\usetikzlibrary{decorations.markings,arrows.meta}
\draw[blue, dashed] (-0.4,-0.4) rectangle (3.4,0.4)
                    (0.3,1.6) rectangle (2.7,2.4);
\foreach \ix/\cx in {0.75/0.75, 2.25/2.25} {
  \coordinate (d2c) at (\cx,3.5);
  \node[draw, ellipse, line width=1.3pt, minimum width=1.2cm, minimum height=0.7cm] at (d2c) {$D_2$};
  \coordinate (el) at ($(d2c)+(-0.52,-0.20)$);
  \coordinate (er) at ($(d2c)+(0.52,-0.20)$);
  \draw[doublearrow] (\ix,2.1) -- (el);
  \draw[doublearrow] (\ix,2.1) -- (er);
}                    
\node[left] at (0.2,2) {$I(D_1)$};
\node[left] at (-0.5,0) {$V(D_1)$};
\node[vertex] (v1) at (0,0) {};
\node[vertex] (v2) at (1.5,0) {};
\node[vertex] (v3) at (3,0) {};
\node[ivertex] (i1) at (0.75,2) {};
\node[ivertex] (i2) at (2.25,2) {};
\foreach \a/\b in {v1/i1, i1/v2, v2/i2, i2/v3}
  \draw[midarrow] (\a) -- (\b);
\end{tikzpicture}}
\caption{The subdivision-arc corona $D_1\ominus\,D_2$}
\end{subfigure}
\caption{
Above, $D_1=P_3$ (the directed path on three vertices) and $D_2$ is an arbitrary digraph. The dashed (red) lines represent all possible arcs in both directions between a vertex $v$ in $V(D_1)$ (for $D_1\odot\,D_2$) or $I(D_1)$ (for $D_1\ominus\,D_2$), and all vertices in the copy of $D_2$ corresponding to $v$.}
    \label{fig:ssvc}
\end{figure}

By ordering of the vertices of $D_1\odot\, D_2$ as $\bigl(V(D_1),I(D_1),\bigsqcup_{v\in V(D_1)} V(D_2^{(v)})\bigr)$, the adjacency, Laplacian and signless Laplacian matrices for $D_1\odot\, D_2$ are respectively permutation similar to
\[
A\left(D_1\odot\, D_2\right)=
\begin{bmatrix}
O_{n_1} & B_{\textnormal{out}}(D_1) & I_{n_1}\otimes J_{1,n_2}\\
B_{\textnormal{in}}(D_1)^{T} & O_{m_1} & O_{m_1,n_1n_2}\\
I_{n_1}\otimes J_{n_2,1} & O_{n_1n_2,m_1} & I_{n_1}\otimes A(D_2)
\end{bmatrix},
\] 
\[
L\bigl(D_1\odot\, D_2\bigr)
=
\begin{bmatrix}
D_{\textnormal{out}}(D_{1})+n_2I_{n_1} &
-\,B_{\textnormal{out}}(D_{1}) &
-\,I_{n_{1}}\otimes J_{1,n_{2}} \\
-\,B_{\textnormal{in}}(D_{1})^{T} &
I_{m_{1}} &
O_{m_1,n_1n_2} \\
-\,I_{n_{1}}\otimes J_{n_{2},1} &
O_{n_{1}n_{2},m_1} &
I_{n_{1}}\otimes \bigl(L(D_{2}) + I_{n_{2}}\bigr)
\end{bmatrix},
\]
\[
Q\bigl(D_1\odot\, D_2\bigr)
=
\begin{bmatrix}
D_{\textnormal{out}}(D_{1})+n_2I_{n_1} &
\,B_{\textnormal{out}}(D_{1}) &
\,I_{n_{1}}\otimes J_{1,n_{2}} \\
\,B_{\textnormal{in}}(D_{1})^{T} &
I_{m_{1}} &
O_{m_1,n_1n_2} \\
\,I_{n_{1}}\otimes J_{n_{2},1} &
O_{n_{1}n_{2},m_1} &
I_{n_{1}}\otimes \bigl(Q(D_{2})+I_{n_2}\bigr)
\end{bmatrix}.
\]  

\begin{theorem}
Let $D_1$ and $D_2$ be two digraphs on $n_1$ and $n_2$ vertices, and $m_1$ and $m_2$ arcs, respectively. Then
\[
f_{A\left(D_1\odot\, D_2\right)}(\lambda)=\lambda^{m_1-n_1}\left[f_{A\left(D_2\right)}(\lambda)\right]^{n_1} f_{A\left(D_1\right)}\left(\lambda^2-\lambda \chi_{A\left(D_2\right)}(\lambda)\right).
\]
In particular, the $A$-spectrum of $D_1\odot\, D_2$ is completely determined by the $A$-spectra of $D_1$ and $D_2$, and the $A$-coronal of $D_2$.
\end{theorem}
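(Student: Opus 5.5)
We apply \cref{cor:3by3}(3) to the block form of $A(D_1\odot\, D_2)$ displayed just before the statement. That matrix is exactly $M_3$ with
\[
r=n_1,\quad s=m_1,\quad t=n_2,\quad \alpha=\beta=\gamma=0,\quad B=B_{\textnormal{out}}(D_1),\quad C=B_{\textnormal{in}}(D_1)^T,\quad F=A(D_2),
\]
taking the $+$ signs in the off-diagonal $J$-blocks. The vertex ordering $\bigl(V(D_1),I(D_1),\bigsqcup_{v} V(D_2^{(v)})\bigr)$ matches the Kronecker block structure $I_{n_1}\otimes J_{1,n_2}$ and $I_{n_1}\otimes A(D_2)$ required in $M_3$, so no reordering is needed.

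\cref{cor:3by3}(3) then gives
\[
f_{A(D_1\odot\, D_2)}(\lambda)=\lambda^{m_1-n_1}\,\bigl[f_{A(D_2)}(\lambda)\bigr]^{n_1}\, f_{BC}\Bigl(\lambda\bigl(\lambda-\chi_{A(D_2)}(\lambda)\bigr)\Bigr).
\]
By \cref{lem:bb}, $BC=B_{\textnormal{out}}(D_1)B_{\textnormal{in}}(D_1)^T=A(D_1)$. Hence $f_{BC}(\mu)=f_{A(D_1)}(\mu)$ with $\mu=\lambda^2-\lambda\chi_{A(D_2)}(\lambda)$, which is the claimed formula.

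The only point needing care is that \cref{prop:M1-charpoly-ones} and \cref{cor:3by3} are identities over $\mathbb{C}(\lambda)$, where $\lambda I-A(D_2)$ and $(\lambda-\beta)I_{m_1}=\lambda I_{m_1}$ are invertible. The resulting expression is a rational function equal to the polynomial $f_{A(D_1\odot\, D_2)}(\lambda)$, so the identity holds as rational functions, and the spectrum is read off after clearing denominators. The exponent $m_1-n_1$ may be negative, and this is handled by the same remark following \cref{cor:3by3}.

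The closing ``in particular'' is immediate, since the right-hand side involves only $f_{A(D_1)}$, $f_{A(D_2)}$ and $\chi_{A(D_2)}$. No step is a genuine obstacle; the main thing to verify is that the block shapes exactly match $M_3$, namely that $\ell=r=n_1$ in \cref{prop:M1-charpoly-ones}, so that $I_\ell\otimes J_{r/\ell}=I_{n_1}$ and the coronal enters as a scalar shift rather than through a rank-one $J$ term.
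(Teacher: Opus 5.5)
Your proposal is correct and is essentially the paper's proof, which is the one-line appeal to \cref{lem:bb} and \cref{cor:3by3}(3); your explicit identification $r=n_1$, $s=m_1$, $t=n_2$, $\alpha=\beta=\gamma=0$, $B=B_{\textnormal{out}}(D_1)$, $C=B_{\textnormal{in}}(D_1)^T$, $F=A(D_2)$ just spells that out. One small quibble: a negative exponent $m_1-n_1$ needs no special justification, since the identity already holds in $\mathbb{C}(\lambda)$; the remark after \cref{cor:3by3} is about absent (zero-dimensional) blocks, such as the case $m_1=0$, not about negative exponents.
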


\begin{proof}
The result follows directly from \cref{lem:bb} and \cref{cor:3by3}(iii).
\end{proof}

In the case that $D_1$ is out-regular, we can obtain simple formulas for the Laplacian and signless Laplacian characteristic polynomial for $D_1\odot\, D_2$.

\begin{theorem}
Let $D_1$ and $D_2$ be two digraphs on $n_1$ and $n_2$ vertices, and $m_1$ and $m_2$ arcs, respectively.
Suppose that $D_1$ is $r$-out-regular.
Then 
\begin{enumerate}
    \item $\displaystyle f_{L(D_1\odot\, D_2)}(\lambda)=
    (-1)^{n_1}\left(\lambda-1\right)^{(r-1)n_1}\,
    f_{L(D_1)}\left(-\lambda^2+\lambda(n_2+r+1)\right)\,
    \bigl[f_{L(D_2)}(\lambda-1)\bigr]^{n_1}$,
    \item $\displaystyle f_{Q(D_1\odot\, D_2)}(\lambda)=
    \left(\lambda-1\right)^{(r-1)n_1}\,     
    f_{Q(D_1)}\left(r+(\lambda-1)(\lambda-r-n_2-\chi_{Q(D_2)}(\lambda-1))\right)\,
    \bigl[f_{Q(D_2)}(\lambda-1)\bigr]^{n_1}$.
\end{enumerate}    
In particular, the $L$-spectrum of $D_1\odot\, D_2$ is completely determined by the $L$-spectra of $D_1$ and $D_2$, and the $Q$-spectrum of $D_1\odot\, D_2$ is completely determined by the $Q$-spectra of $D_1$ and $D_2$, and the $Q$-coronal of $D_2$.
\end{theorem}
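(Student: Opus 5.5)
The plan is to apply \cref{cor:3by3}(3) directly to the block forms of $L(D_1\odot\, D_2)$ and $Q(D_1\odot\, D_2)$ displayed above. Since $D_1$ is $r$-out-regular, $D_{\textnormal{out}}(D_1)=rI_{n_1}$ and $m_1=n_1r$. For the Laplacian we take
\[
\alpha=r+n_2,\quad \beta=1,\quad \gamma=1,\quad F=L(D_2),\quad B=-B_{\textnormal{out}}(D_1),\quad C=-B_{\textnormal{in}}(D_1)^T,
\]
so that $BC=A(D_1)$ by \cref{lem:bb}, and the sign choice in the off-diagonal Kronecker blocks is $-$. For $Q$ we make the same choices with $F=Q(D_2)$ and $B,C$ without signs, so again $BC=A(D_1)$. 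Here $r\cdot$(number of rows) corresponds to $n_1$, $s=m_1$ and $t=n_2$.

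\Cref{cor:3by3}(3) then gives
\[
f(\lambda)=(\lambda-1)^{m_1-n_1}\,[f_F(\lambda-1)]^{n_1}\, f_{A(D_1)}\Bigl((\lambda-1)\bigl(\lambda-r-n_2-\chi_F(\lambda-1)\bigr)\Bigr).
\]
The exponent $m_1-n_1$ equals $(r-1)n_1$.

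For the Laplacian, every row sum of $L(D_2)$ is $0$, so \cref{coronal_cal} gives $\chi_{L(D_2)}(\lambda-1)=n_2/(\lambda-1)$. The argument of $f_{A(D_1)}$ therefore simplifies to
\[
(\lambda-1)(\lambda-r-n_2)-n_2=\lambda^2-\lambda(n_2+r+1)+r.
\]
Converting via $f_{A(D_1)}(x)=(-1)^{n_1}f_{L(D_1)}(r-x)$ (from $L(D_1)=rI-A(D_1)$, i.e.\ \cref{prop:regular}) yields $f_{L(D_1)}\bigl(-\lambda^2+\lambda(n_2+r+1)\bigr)$ with the factor $(-1)^{n_1}$, as claimed.

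For the signless Laplacian, we use $f_{A(D_1)}(x)=f_{Q(D_1)}(x+r)$, which gives the stated argument $r+(\lambda-1)(\lambda-r-n_2-\chi_{Q(D_2)}(\lambda-1))$.

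The only delicate point is bookkeeping: correctly matching $\alpha,\beta,\gamma$ and the signs to the block structure of \cref{cor:3by3}(3), and checking that the $-$ signs in $B$ and $C$ cancel in $BC$. The rest is direct substitution, with the "in particular" statements read off from the resulting formulas.
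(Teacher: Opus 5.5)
Your proposal is correct and follows the paper's proof essentially verbatim. Like the paper, you apply \cref{cor:3by3}(3) with $\alpha=r+n_2$, $\beta=\gamma=1$ and $BC=A(D_1)$. You then substitute $\chi_{L(D_2)}(\lambda-1)=n_2/(\lambda-1)$ and use \cref{prop:regular} to rewrite $f_{A(D_1)}$ as $f_{L(D_1)}$ or $f_{Q(D_1)}$. One cosmetic point: the corollary's dimension parameter is also called $r$ (here it equals $n_1$), so rename it rather than write ``$r\cdot$(number of rows)'' to avoid clashing with the out-regularity $r$.
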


\begin{proof}
Since $D_1$ is $r$-out-regular, it has $m_1 = n_1 r$ arcs and $D_{\textnormal{out}}(D_1)=rI_{n_1}$.
For (1), by \cref{lem:bb} and \cref{cor:3by3}(iii) we have
\[ 
f_{L(D_1\odot\, D_2)}(\lambda)=
\left(\lambda-1\right)^{(r-1)n_1}\,
\bigl[f_{L(D_2)}(\lambda-1)\bigr]^{n_1}\,
f_{A(D_1)}\left((\lambda-1)\bigl(\lambda-r-n_2-\chi_{L(D_2)}(\lambda-1)\bigr)\right).
\]
Then \cref{prop:regular} gives
\[ 
f_{L(D_1\odot\, D_2)}(\lambda)=
\left(\lambda-1\right)^{(r-1)n_1}\,
\bigl[f_{L(D_2)}(\lambda-1)\bigr]^{n_1}\,
(-1)^{n_1}
f_{L(D_1)}\left(r-(\lambda-1)\bigl(\lambda-r-n_2-\chi_{L(D_2)}(\lambda-1)\bigr)\right),
\]
from which (1) follows since $\chi_{L(D_2)}(\lambda)=n_2/\lambda$ by \cref{coronal_cal}.

For (2), by \cref{lem:bb} and \cref{cor:3by3}(iii) we have
\[ 
f_{Q(D_1\odot\, D_2)}(\lambda)=
\left(\lambda-1\right)^{(r-1)n_1}\,
\bigl[f_{Q(D_2)}(\lambda-1)\bigr]^{n_1}\,
f_{A(D_1)}\left((\lambda-1)\bigl(\lambda-r-n_2-\chi_{Q(D_2)}(\lambda-1)\bigr)\right),
\]
from which (2) follows by applying \cref{prop:regular}.
\end{proof}

\subsection{The subdivision-arc corona of two digraphs}
Let $D_1$ and $D_2$ be digraphs where $D_1$ has $m_1$ arcs.
The \emph{(symmetric) subdivision-arc corona} $D_1\ominus D_2$
is obtained by taking $\mathcal{S}(D_1)$ and $m_1$ copies of $D_2$ (one copy for each vertex $v\in I(D_1)$, denoted by $D_2^{(v)}$), and for every $v\in I(D_1)$, adding both arcs $vw$ and $wv$ for all $w\in V(D_2^{(v)})$ (see \cref{fig:ssvc}(b)).
It is clear that $D_1\ominus\, D_2$ is strongly connected if and only if $D_1$ is strongly connected.

By ordering the vertices of $D_1\ominus\, D_2$ as $\bigl(V(D_1),I(D_1),\bigsqcup_{v\in I(D_1)} V(D_2^{(v)})\bigr)$, the adjacency, Laplacian and signless Laplacian matrices for $D_1\ominus\, D_2$ are respectively permutation similar to 
\[
A\left(D_1\ominus\, D_2\right)=
\begin{bmatrix}
O_{n_1} & B_{\textnormal{out}}(D_1) & O_{n_1,m_1n_2}\\
B_{\textnormal{in}}(D_1)^{T} & O_{m_1} & I_{m_1}\otimes J_{1,n_2}\\
O_{m_1n_2,n_1} & I_{m_1}\otimes J_{n_2,1} & I_{m_1}\otimes A(D_2)
\end{bmatrix},
\] 
\[
L\left(D_1\ominus\, D_2\right)=
\begin{bmatrix}
D_{\textnormal{out}}(D_{1}) 
& -\,B_{\textnormal{out}}(D_{1}) 
& O_{n_1,m_1n_2} \\
-\,B_{\textnormal{in}}(D_{1})^{T} 
& (\,1 + n_{2}\,)\,I_{m_{1}} 
& -\,I_{m_{1}}\;\otimes\;J_{1,n_{2}} \\
O_{m_1n_2,n_1}  
& -\,I_{m_{1}}\;\otimes\;J_{n_{2},1} 
& I_{m_{1}}\;\otimes \bigl(L(D_{2}) + I_{n_{2}}\bigr)
\end{bmatrix},
\]
\[
Q\left(D_1\ominus\, D_2\right)=
\begin{bmatrix}
D_{\textnormal{out}}(D_{1}) 
& \,B_{\textnormal{out}}(D_{1}) 
& O_{n_1,m_1n_2} \\
\,B_{\textnormal{in}}(D_{1})^{T} 
& (\,1 + n_{2}\,)\,I_{m_{1}} 
& \,I_{m_{1}}\;\otimes\;J_{1,n_{2}} \\
O_{m_1n_2,n_1} 
& \,I_{m_{1}}\;\otimes\;J_{n_{2},1} 
& I_{m_{1}}\;\otimes \bigl(Q(D_{2}) + I_{n_{2}}\bigr)
\end{bmatrix}.
\]

\begin{theorem}
Let $D_1$ and $D_2$ be digraphs on $n_1$ and $n_2$ vertices, and $m_1$ and $m_2$ arcs, respectively. Then
\[
f_{A\left(D_1\ominus\, D_2\right)}(\lambda)
=\left(\lambda-\chi_{A\left(D_2\right)}(\lambda)\right)^{m_1-n_1}\left[f_{A\left(D_2\right)}(\lambda)\right]^{m_1} 
f_{A(D_1)}\left(\lambda^2-\lambda \chi_{A\left(D_2\right)}(\lambda)\right).
\]
In particular, the $A$-spectrum of $D_1\ominus\, D_2$ is completely determined by the $A$-spectra of $D_1$ and $D_2$, and the $A$-coronal of $D_2$.
\end{theorem}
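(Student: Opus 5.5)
Apply \cref{cor:3by3}(4) to the block form of $A(D_1\ominus\, D_2)$ displayed just before the statement. Take $r=n_1$, $s=m_1$, $t=n_2$, $\alpha=\beta=\gamma=0$, $B=B_{\textnormal{out}}(D_1)$, $C=B_{\textnormal{in}}(D_1)^T$ and $F=A(D_2)$, all with the $+$ sign. The matrix then has exactly the shape of $M_4$, and \cref{cor:3by3}(4) gives
\[
f_{A(D_1\ominus\, D_2)}(\lambda)=\lambda^{n_1-m_1}\,\bigl[f_{A(D_2)}(\lambda)\bigr]^{m_1}\,f_{CB}\Bigl(\lambda\bigl(\lambda-\chi_{A(D_2)}(\lambda)\bigr)\Bigr).
\]
By \cref{lem:bb}, $CB=B_{\textnormal{in}}(D_1)^TB_{\textnormal{out}}(D_1)=A(\mathcal{L}(D_1))$. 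So the last factor is $f_{A(\mathcal{L}(D_1))}(\mu)$ with $\mu=\lambda^2-\lambda\chi_{A(D_2)}(\lambda)$.

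Next, rewrite this factor in terms of $D_1$ using the identity $\mu^{n_1}f_{A(\mathcal{L}(D_1))}(\mu)=\mu^{m_1}f_{A(D_1)}(\mu)$ from \cref{lem:bb}. This gives $f_{A(\mathcal{L}(D_1))}(\mu)=\mu^{m_1-n_1}f_{A(D_1)}(\mu)$. Since $\mu=\lambda\bigl(\lambda-\chi_{A(D_2)}(\lambda)\bigr)$, the factor $\mu^{m_1-n_1}$ splits as $\lambda^{m_1-n_1}\bigl(\lambda-\chi_{A(D_2)}(\lambda)\bigr)^{m_1-n_1}$. The power $\lambda^{m_1-n_1}$ cancels the prefactor $\lambda^{n_1-m_1}$, leaving
\[
\bigl(\lambda-\chi_{A(D_2)}(\lambda)\bigr)^{m_1-n_1}\bigl[f_{A(D_2)}(\lambda)\bigr]^{m_1}f_{A(D_1)}\bigl(\lambda^2-\lambda\chi_{A(D_2)}(\lambda)\bigr),
\]
which is the claimed formula.

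The main point needing care is that these are identities of rational functions in $\mathbb{C}(\lambda)$, not evaluations at specific numbers. The Schur complement steps behind \cref{cor:3by3} require $\lambda I-F$ and the relevant $\lambda$-blocks to be invertible. This holds generically, since $\chi_{A(D_2)}$ is a well-defined rational function. The substitution of $\mu$ into the polynomial identity from \cref{lem:bb} is also legitimate, because that identity holds as polynomials and hence under any substitution. Both sides of the final formula are rational functions agreeing for all but finitely many $\lambda$, and the left side is a polynomial, so the identity holds everywhere. The closing remark that the spectrum is determined by the $A$-spectra of $D_1$ and $D_2$ and the $A$-coronal of $D_2$ is then immediate from the form of the formula.
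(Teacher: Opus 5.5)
Your proof is correct and follows the same route as the paper. You identify $A(D_1\ominus\,D_2)$ with $M_4$ in \cref{cor:3by3} (taking $r=n_1$, $s=m_1$, $t=n_2$, $\alpha=\beta=\gamma=0$), recognize $CB=A(\mathcal{L}(D_1))$, and convert via $f_{A(\mathcal{L}(D_1))}(\mu)=\mu^{m_1-n_1}f_{A(D_1)}(\mu)$ from \cref{lem:bb}, so that $\lambda^{m_1-n_1}$ cancels the prefactor $\lambda^{n_1-m_1}$. Your remarks about working in $\mathbb{C}(\lambda)$ only make explicit what the paper leaves implicit; for the division by $\mu^{n_1}$, note that $\mu$ is a nonzero rational function because $\chi_{A(D_2)}(\lambda)\to 0$ as $\lambda\to\infty$.
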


\begin{proof}
The result follows directly from \cref{cor:3by3}(iv)
and noting 
\[
f_{A(\mathcal{L}(D_1))}\left(\lambda^2-\lambda \chi_{A\left(D_2\right)}(\lambda)\right)
=\lambda^{m_1-n_1}\left(\lambda-\chi_{A\left(D_2\right)}(\lambda)\right)^{m_1-n_1}f_{A(D_1)}\left(\lambda^2-\lambda \chi_{A\left(D_2\right)}(\lambda)\right)
\]
by \cref{lem:bb}.
\end{proof}

In the case that $D_1$ is out-regular, we can obtain simple formulas for the Laplacian and signless Laplacian characteristic polynomial for $D_1\ominus\, D_2$.

\begin{theorem}
Let $D_1$ and $D_2$ be two digraphs on $n_1$ and $n_2$ vertices, and $m_1$ and $m_2$ arcs, respectively.
Suppose that $D_1$ is $r$-out-regular.
Then 
\begin{enumerate}[leftmargin=*]
    \item $\displaystyle f_{L(D_1\ominus\, D_2)}(\lambda)=$\\
    $\displaystyle (-1)^{n_1}\,\left(\lambda-1-n_2-n_2/(\lambda-1)\right)^{(r-1)n_1}\,
    f_{L(D_1)}\left(r-(\lambda-r)(\lambda-1-n_2-n_2/(\lambda-1))
    \right)\,
    \bigl[f_{L(D_2)}(\lambda-1)\bigr]^{n_1r}$.    
    \item $\displaystyle f_{Q(D_1\ominus\, D_2)}(\lambda)=$\\
    $\displaystyle\left(\lambda-1-n_2-\chi_{Q(D_2)}(\lambda-1)\right)^{(r-1)n_1}\,  
    f_{Q(D_1)}\left(r+(\lambda-r)(\lambda-1-n_2-\chi_{Q(D_2)}(\lambda-1))\right)\,
    \bigl[f_{Q(D_2)}(\lambda-1)\bigr]^{n_1r}$.
\end{enumerate}    
In particular, the $L$-spectrum of $D_1\ominus\, D_2$ is completely determined by the $L$-spectra of $D_1$ and $D_2$, and the $Q$-spectrum of $D_1\ominus\, D_2$ is completely determined by the $Q$-spectra of $D_1$ and $D_2$, and the $Q$-coronal of $D_2$.
\end{theorem}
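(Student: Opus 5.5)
We apply \cref{cor:3by3}(4) directly to the block forms of $L(D_1\ominus\,D_2)$ and $Q(D_1\ominus\,D_2)$ displayed above. Since $D_1$ is $r$-out-regular, we have $D_{\textnormal{out}}(D_1)=rI_{n_1}$ and $m_1=n_1r$. Both matrices then have exactly the shape of $M_4$ with $r\to n_1$, $s\to m_1$, $t\to n_2$, $\alpha=r$, $\beta=1+n_2$ and $\gamma=1$. For the Laplacian we take $B=-B_{\textnormal{out}}(D_1)$, $C=-B_{\textnormal{in}}(D_1)^T$, $F=L(D_2)$ and the sign $-$ on the $J$ blocks; for the signless Laplacian we take $B=B_{\textnormal{out}}(D_1)$, $C=B_{\textnormal{in}}(D_1)^T$, $F=Q(D_2)$ and the sign $+$. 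In both cases $CB=B_{\textnormal{in}}(D_1)^TB_{\textnormal{out}}(D_1)=A(\mathcal{L}(D_1))$ by \cref{lem:bb}. \Cref{cor:3by3}(4) therefore gives
\[
f(\lambda)=(\lambda-r)^{n_1-m_1}\,[f_F(\lambda-1)]^{m_1}\,f_{A(\mathcal{L}(D_1))}\bigl((\lambda-r)\,\mu(\lambda)\bigr),
\]
where $\mu(\lambda)=\lambda-1-n_2-\chi_F(\lambda-1)$.

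Next we convert the line-digraph polynomial back to $D_1$. By \cref{lem:bb} (or \cref{prop:regular}), $f_{A(\mathcal{L}(D_1))}(x)=x^{m_1-n_1}f_{A(D_1)}(x)$. Substituting $x=(\lambda-r)\mu(\lambda)$, the factor $(\lambda-r)^{m_1-n_1}$ cancels the prefactor $(\lambda-r)^{n_1-m_1}$. What remains is $\mu(\lambda)^{(r-1)n_1}\,[f_F(\lambda-1)]^{n_1r}\,f_{A(D_1)}\bigl((\lambda-r)\mu(\lambda)\bigr)$, using $m_1-n_1=(r-1)n_1$ and $m_1=n_1r$.

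Finally, \cref{prop:regular} rewrites $f_{A(D_1)}$ in terms of the Laplacian or signless Laplacian. We have $f_{A(D_1)}(x)=(-1)^{n_1}f_{L(D_1)}(r-x)$ and $f_{A(D_1)}(x)=f_{Q(D_1)}(r+x)$. This yields the arguments $r-(\lambda-r)\mu$ and $r+(\lambda-r)\mu$, respectively. For (1) we use $\chi_{L(D_2)}(\lambda-1)=n_2/(\lambda-1)$, which follows from \cref{coronal_cal} because $L(D_2)$ has zero row sums. This turns $\mu$ into $\lambda-1-n_2-n_2/(\lambda-1)$. For (2), $\mu$ is left in terms of $\chi_{Q(D_2)}(\lambda-1)$.

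The main point needing care is sign bookkeeping in the Laplacian case, since $B$, $C$ and the $J$ blocks all carry minus signs. The product $CB$ is sign-invariant, and the $\pm J$ blocks enter \cref{prop:M1-charpoly-ones} only through the product $(\pm)(\pm)=+$, so \cref{cor:3by3}(4) applies verbatim. A lesser point is that these are identities of rational functions valid for generic $\lambda$, which is enough.
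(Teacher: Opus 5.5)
Your proposal is correct and is essentially the paper's own proof: the paper also applies \cref{cor:3by3}(4) with $\alpha=r$, $\beta=1+n_2$, $\gamma=1$, $F=L(D_2)$ or $Q(D_2)$ and $CB=A(\mathcal{L}(D_1))$. It then uses \cref{lem:bb} and \cref{prop:regular} to rewrite $f_{A(\mathcal{L}(D_1))}$ in terms of $f_{L(D_1)}$ or $f_{Q(D_1)}$, and \cref{coronal_cal} to get $\chi_{L(D_2)}(\lambda-1)=n_2/(\lambda-1)$. The differences are cosmetic: you pass through $f_{A(D_1)}$ explicitly, and you correctly cite part (4) in the signless Laplacian case, where the paper's proof mistakenly cites (iii).
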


\begin{proof}
Since $D_1$ is $r$-out-regular, it has $m_1 = n_1 r$ arcs and $D_{\textnormal{out}}(D_1)=rI_{n_1}$.
For (1), by \cref{lem:bb} and \cref{cor:3by3}(iv) we have
\[ 
f_{L(D_1\ominus\, D_2)}(\lambda)=
\left(\lambda-r\right)^{(1-r)n_1}\,
\bigl[f_{L(D_2)}(\lambda-1)\bigr]^{n_1r}\,
f_{A(\mathcal{L}(D_1))}\left((\lambda-r)(\lambda-1-n_2-\chi_{L(D_2)}(\lambda-1))\right).
\]
\cref{prop:regular} gives
\[ 
f_{A(\mathcal{L}(D_1))}(\lambda)
=\frac{(-1)^{n_1}}{\lambda^{n_1(1-r)}}f_{L(D_1)}(r-\lambda),
\]
from which (1) follows since $\chi_{L(D_2)}(\lambda)=n_2/\lambda$ by \cref{coronal_cal}.

For (2), by \cref{lem:bb} and \cref{cor:3by3}(iii) we have
\[ 
f_{Q(D_1\ominus\, D_2)}(\lambda)=
\left(\lambda-r\right)^{(1-r)n_1}\,
\bigl[f_{Q(D_2)}(\lambda-1)\bigr]^{n_1r}\,
f_{A(\mathcal{L}(D_1))}\left((\lambda-r)(\lambda-1-n_2-\chi_{Q(D_2)}(\lambda-1))\right).
\]
\cref{prop:regular} gives
\[ 
f_{A(\mathcal{L}(D_1))}(\lambda)
=\frac{1}{\lambda^{n_1(1-r)}}f_{Q(D_1)}(r+\lambda),
\]
from which (2) follows.
\end{proof}


\end{document}